\newtheorem{theorem}{Theorem}
\newtheorem{lemma}{Lemma}
\newtheorem{proposition}{Proposition}
\newtheorem{definition}{Definition}
\newtheorem{remark}{Remark}
\newtheorem{corollary}{Corollary}
\colorlet{MyBlue}{DodgerBlue!60!Black}
\colorlet{MyGreen}{DarkGreen!85!Black}
\numberwithin{equation}{section}  %numberwithin goes before cleverefs when using hyperref
\crefname{app}{Appendix}{Appendices}
\newcommand{\x}{\mathbf{x}}
\newcommand{\whp}{\textbf{whp}}
\newcommand{\ind}{\mathbf{1}}
\newcommand{\cA}{\ensuremath{\mathcal A}} 
\newcommand{\cB}{\ensuremath{\mathcal B}} 
\newcommand{\cD}{\ensuremath{\mathcal D}} 
\newcommand{\cF}{\ensuremath{\mathcal F}}
\newcommand{\cK}{\ensuremath{\mathcal K}} 
\newcommand{\cL}{\ensuremath{\mathcal L}}
\newcommand{\E}{\ensuremath{\mathbb{E}}}
\newcommand{\N}{\ensuremath{\mathbb{N}}}
\newcommand{\R}{\ensuremath{\mathbb{R}}}
\renewcommand{\P}{\ensuremath{\mathbb{P}}}
\def\({\left(}
\def\){\right)}
\def\[{\left[}
\def\]{\right]}
\newacro{NE}{Nash equilibrium}
\newacro{PNE}{pure Nash equilibrium}
\newacro{PFNE}{prior-free Nash equilibrium}
\newacro{WE}{Wardrop equilibrium}
\newacro{SO}{socially optimum}
\newacro{KKT}{Karush\textendash Kuhn\textendash Tucker}
\newacro{OD}[O/D]{origin-destination}
\newacro{PoA}{price of anarchy}
\newacro{PoS}{price of stability}
\newacro{PoCS}{price of correlated stability}
\newacro{BPR}{bureau of public roads}
\newacro{FIP}{finite improvement property}
\newacro{BPG}{buck-passing game}
\newacro{SBPG}{stochastic buck-passing game}
\newacro{MBPG}{mixed extension of the buck-passing game}
\begin{document}

%----------------------------------------------------------------------
%%% TITLE & AUTHORS
%----------------------------------------------------------------------
\title[Loop-erased partitioning ]{Loop-erased partitioning of a graph:\\ mean-field analysis}
%Loop-erased partitioning: \\two-point correlations on mean-field models}%and communities in averaged stochastic block model}

\author[L.~Avena]{Luca Avena$^\ddag$} 
\address{{$^\ddag$ Leiden University, Mathematical Institute, Niels Bohrweg 1
		2333 CA, Leiden. The Netherlands.}}
\email{l.avena@math.leidenuniv.nl}
%\today

\author[A.~Gaudilli\`ere]{Alexandre Gaudilli\`ere$^\star$} 
\address{
	$\star$ Aix-Marseille Universit\'e, CNRS, Centrale Marseille. I2M UMR CNRS 7373. 39, rue Joliot Curie. 13 453 Marseille Cedex
13. France.}
\email{alexandre.gaudilliere@math.cnrs.fr}

\author[P.~Milanesi]{Paolo Milanesi$^\S$} 
\address{
	$^\S$ Aix-Marseille Universit\'e, CNRS, Centrale Marseille. I2M UMR CNRS 7373. 39, rue Joliot Curie. 13 453 Marseille Cedex
	13. France.}
\email{paolo.milanesi@univ-amu.fr}

\author
[M.~Quattropani]
{Matteo Quattropani$^*$}
\address{$^*$ Dipartimento di Matematica e Fisica, Universit\`a di Roma Tre, Largo S. Leonardo Murialdo 1, 00146 Roma, Italy.}
\email{matteo.quattropani@uniroma3.it}
%\urladdr{\url{http://www.there.com}}

\subjclass[2010]{ 05C81, 05C85, 60J10, 60J27, 60J28}
\keywords{Discrete Laplacian, random partitions, loop-erased random walk, Wilson's algorithm, spanning rooted forests}
% 05C81: Random walks on graphs.
% 05C85: Graph algorithms
% 15A15: Determinants, permanents, other special matrix functions
%15A18: Eigenvalues, singular values, and eigenvectors
%60J10:  Markov chains (discrete-time Markov processes on discrete state spaces)
%60J20:  Applications of Markov chains and discrete-time Markov processes on general state spaces (social mobility, learning theory, industrial processes, etc.)
%60J27: Continuous-time Markov processes on discrete state spaces
%60J28:  Applications of continuous-time Markov processes on discrete state spaces

\begin{abstract} We consider a random partition of the vertex set of an arbitrary graph that can be sampled using loop-erased random walks stopped at a random independent exponential time of parameter $q>0$, that we see as a tuning parameter.The related random blocks tend to cluster nodes visited by the random walk on time scale $1/q$.
We explore the emerging macroscopic structure by analyzing 2-point correlations. 
To this aim, it is defined an interaction potential between pair of vertices, as the probability that they do not belong to the same block of the random partition.
This interaction potential can be seen as an affinity measure for ``densely connected nodes'' and capture well-separated regions in network models presenting non-homogeneous landscapes. 
In this spirit, we compute this potential and its scaling limits on a complete graph and on a non-homogeneous weighted version with community structures. For the latter geometry we show a phase-transition for ``community detectability'' as a function of the tuning parameter and the edge weights.
%We conclude by observing that as far as detectability is concerned, other related observables can also be used.
\end{abstract}

\maketitle
%\tableofcontents
\section{\large{Intro: Loop-erasure and random partitioning}}\label{intro}
Consider an arbitrary simple undirected weighted connected graph $G=(V, E, w)$ on $N=|V|$ vertices where $E=\{e=(x,y): x,y \in V \}$ stands for the edge set and $w: E \rightarrow [0,\infty)$ is a given edge-weight function.
We call the Random Walk (RW) associated to $G$ the continuous-time Markov chain $X=(X_t)_{t\ge 0}$ with state space $V$ and \emph{the discrete Laplacian} as infinitesimal generator, i.e., the $N\times N$ matrix:
\begin{equation}\label{Laplacian} \mathcal{L}=  \mathcal{A}- \mathcal{D}, \end{equation} 
where for any $x,y\in [N]:=\{1,2,\dots ,N\}$,  $\mathcal A(x,y)=w(x,y)\mathbf{1}_{\{ x\neq y\}}$ is the \emph{weighted adjacency matrix} and  $\mathcal D(x,y)=\mathbf{1}_{\{ x=y\}}\sum_{z\in[N]\setminus\{x\}} w(x,z) $ is the \emph{diagonal matrix} guarantying that the entries of each row in $\mathcal L$ sum up to $0$.

The goal of this paper is to explore the following probability measure on the set of partitions $\mathcal P(V)$ of the vertex set $V$.
\begin{definition}[{\bf Loop-erased partitioning}]\label{LEP}
	Given $G=(V, E, w)$, fix a positive parameter $q>0$. We call \emph{loop-erased} a partition of $V$ in $m\leq N$ blocks sampled according to the following probability measure:
	\begin{equation}\label{LEPmeas}
	\mu_q(\Pi_m)=  \frac{q^m \times \sum_{F: \Pi(F)=\Pi_m } w(F)}{Z(q)}, \quad\quad \Pi_m\in \mathcal P(V), \end{equation}
	where the sum is over spanning rooted forests $F$'s of $G$, 
	$\Pi(F)$ stands for the partition of $V$ induced by a forest $F$, $w(F):=\prod_{e\in F} w(e)$ for the forest weight, and 
	$Z(q)$ is a normalizing constant.
	We denote by $\Pi_q$ a random variable in $\mathcal P(V)$ with law $\mu_q$. 
\end{definition}
In the above definition a spanning rooted forest of a graph is a collection of rooted trees spanning its vertex set. Denoting by $\mathcal F$ the set of spanning rooted forests of $G$, we note that---due to the matrix tree theorem---the normalizing constant in~\cref{LEPmeas} can be expressed as the characteristic polynomial of the matrix $\mathcal L$ evaluated at $q$, i.e. 
$$Z(q):=\sum_{F\in \mathcal F }q^{|F|} w(F)=\det[qI- \mathcal L],$$
where $|F|$ denotes the number of trees in $F\in \mathcal F$. Furthermore, the number of blocks in $\Pi_q$, denoted by $|\Pi_q|$, is distributed as the sum of $N$ independent Bernoulli random variables with success probabilities $\frac{q}{q+\lambda_i}$, for $i\leq N $, with $\lambda_i$'s being the eigenvalues of $- \cL$.
We refer the reader to~\cite[Prop. 2.1]{AG} for a proof of these statements.

\subsection{Tuning parameter and underlying geometry.}
%\vspace{0.5cm}\noindent {\bf Small Vs fat blocks.} 
The first factor $q^m$ in~\cref{LEPmeas} favors partitions having many small blocks as $q$ growths, while 
as $q$ vanishes, the measure degenerates into a one-block partition. 
The second combinatorial factor takes into account the underlying geometry and for example in the unweighted case (i.e. constant edge--weights $w\equiv1$ ) counts how many rooted forests are compatible with a given partition. In the simple setup of an unweighted complete graph on $N$ vertices , the measure in~\cref{LEP} reduces to  
\begin{equation}\label{completeLEP}
\mu_q(\Pi_m)=  \frac{q^m \times \prod_{i=1}^m n_i^{n_i-1}}{q(q+N)^{N-1}}, \end{equation}
for a partition $\Pi_m=\{B_1,\ldots, B_m\}\in \mathcal P(V)$ constituted of $m$ blocks with sizes $|B_i|=:n_i$, $i\leq m$ such that $\sum_{i\leq m} n_i=N$. 
In particular, we see in this setup that this second factor favors partitions with a few ``fat'' blocks. 
Notice that~\cref{completeLEP} holds true because, by Cayley's formula,  $n_i^{n_i-2}$ unrooted trees can cover block $B_i$, and since we are dealing with rooted trees, an extra volume factor $n_i$ for the possible roots is needed.
In general, the competition between these two factors depends on the delicate interplay among the tuning parameter $q$, the underlying geometry and the weight function $w$.

%The second combinatorial factor concentrates instead on partitions with a few ``fat'' blocks. Indeed in the unweighted case this second factor is counting how many rooted trees can be arranged in each block. For example, in the simple setup of a complete graph on $N$ vertices with constant edge--weights $w\equiv1$, the measure in~\cref{LEP} reduces to % \textcolor{blue}{{\bf [ADD explicit expression of partition function]}} 
%\begin{equation}\label{completeLEP}
%\mu_q(\Pi_m)=  \frac{q^m \times \prod_{i=1}^m n_i^{n_i-1}}{q(q+N)^{N-1}}, \end{equation}
%for a partition $\Pi_m=\{B_1,\ldots, B_m\}\in \mathcal P(V)$ constituted of $m$ blocks with sizes $|B_i|=:n_i$, $i\leq m$ such that $\sum_{i\leq m} n_i=N$. Notice that~\cref{completeLEP} holds true because, by Cayley's formula,  $n_i^{n_i-2}$ unrooted trees can cover block $B_i$, and since we are dealing with rooted trees, an extra volume factor $n_i$ for the possible roots is needed.
%
%In general, this competition between ``many small'' and ``few fat'' blocks depends on the delicate interplay among the tuning parameter $q$, the underlying geometry and the weight function $w$.

%\vspace{0.5cm}\noindent {\bf Sampling algorithm and Loop-Erased RW (LERW).}
\subsection{Sampling algorithm and Loop-Erased RW (LERW)}\label{Wilson}
An attractive feature of this measure is that there exists a simple %efficient\footnote{The averaged running time is given by the sum of the inverse of the eigenvalues of the graph Laplacian, see~\cite{M00}.} 
exact sampling algorithm. Originally due to Wilson~\cite{W96} and based on the associated LERW killed at random times. % with killing.
The LERW with killing is the process obtained by running the RW $X$, erasing cycles as soon as they appear, and stopping the evolving self-avoiding trajectory at an independent random time $\tau_q$ with law an exponential of parameter $q$. 
  
The algorithm can be described as follows:
\begin{enumerate}
	\item\label{1} pick \emph{any arbitrary} vertex in $V$ and run a LERW up to an independent time $\tau_q\overset{d}{\sim}\exp(q).$ Call $\gamma_1$ the obtained self-avoiding trajectory. 
	\item\label{2} pick \emph{any arbitrary} vertex in $V$ that does not belong to $\gamma_1$. 
	Run a LERW until $\min\{\tau_q, \tau_{\gamma_1}\}$, $\tau_{\gamma_1}$ being the first time the RW hits a vertex in $\gamma_1$. Call $\gamma_2$ the union of $\gamma_1$ and the new self-avoiding trajectory obtained in this step. 
	\item Iterate step (\ref{2}) with $\gamma_{\ell+1}$ in place of $\gamma_{\ell}$ until exhaustion of the vertex set $V$. 
\end{enumerate}  

In step (\ref{2}) we note that if the killing occurs before $\tau_{\gamma_1}$, then $\gamma_2$ is a rooted forest in $G$, else $\gamma_2$ is a rooted tree.
 
When the above algorithm stops, it produces a \emph{spanning rooted forest} $F\in \mathcal F$, where the roots are the points where the involved LERWs were killed along the algorithm steps.
The resulting forest $F$ on $G$ induces the partition $\Pi(F)$ of the vertex set $V$, where each block is identified by vertices belonging to the same tree. 
It can be shown that the probability to obtain a given rooted spanning forest $F$ is proportional to $q$ to the power of the number of trees, times the forest weight $w(F)$. It then follows that the induced partition is distributed as $\Pi_q$ in~\cref{LEP}. We refer the reader to~\cite{AG} for the proof of the latter and for more detailed aspects of this algorithm, including dynamical variants. In the sequel we will denote by $\P$ a probability measure on an abstract probability space sufficiently rich for the randomness required by this algorithm.  

%\vspace{0.5cm}\noindent{\bf 
\subsection{Partition detecting ``metastable landscapes''.}  
The Wilson's sampling algorithm described above shows that the resulting partition has the tendency to cluster 
in the same block (tree) points that can be visited by the RW with high probability on time scale $\tau_q$.
In this sense the loop-erased partitioning has the tendency to capture \emph{metastable-like regions} (blocks), namely, regions of points from which it is difficult for the RW to escape on time scale $1/q$.
This makes the probability $\mu_q$ an interesting measure for randomized clustering procedures, see in this direction~\cite{ACGM1} and~\cite[Sec. 5]{ACGM2}.
Yet, a-priori it is not clear how strong and stable is this feature of capturing ``metastable landscapes'', since it heavily depends on the underlying geometry (weighted adjacency matrix) and the choice of the killing parameter $q$.
The scope of this paper is to start making precise this heuristic by analyzing 2-points correlations associated to $\mu_q$ on the simplest \emph{dense} informative geometries.

\subsection{Two-point correlations}
%For $x\in V$, denote by $B_q(x)$ the block in $\Pi_q$ containing $x$. 
For a pair of distinct vertices $x,y\in V$, consider the event that these vertices belong to different blocks in $\Pi_q$. That is, the event $$\{B_q(x)\neq B_q(y) \}:=\{x \text{ and } y \text{ are in different blocks of } \Pi_q\},$$ where $B_q(z)$ stands for the block in $\Pi_q$ containing $z\in V$. 
The probability of this event induces a 2-point correlation function which turns out to be analyzable by means of LERW explorations, and it encodes relevant information on how the resulting partition looks like on the underlying graph as a function of the parameters. 
Here is the formal definition together with an operative characterization.
\begin{definition}[{\bf Pairwise LEP-interaction potential}]\label{FIP}
	For given $q>0$ and $G$, and any pair $x,y\in V$, we call \emph{pairwise LEP-interaction potential} the following probability: 
	\begin{align}\notag
	%U^{(N)}_q(x,y)
	U_q(x,y):=&\P
	(B_q(x)\neq B_q(y))\\ &=\sum_{\gamma}\P^{LE
		_q}_x(\Gamma=\gamma)\P_y(\tau_\gamma>\tau_q)\label{LEdec}
	\end{align}
	where $\P_x^{LE_q}$ and $\P_x$ stand for the laws of the LERW killed at rate $q$ and of the RW, respectively, starting from $x\in V$, and 
	the above sum runs over all possible self-avoiding paths $\gamma$'s starting at $x$. 
\end{definition}

The representation in~\cref{LEdec} is a consequence of Wilson's sampling procedure described in~\cref{Wilson} and it holds true since, remarkably, in steps (\ref{1}) and (\ref{2}) of the algorithm the starting points can be chosen arbitrarily. 

Furthermore, we notice that, as for any generic random partition of $V$, such an interaction potential defines a distance on the vertex set. 
This specific metric $U_q(x,y)$ can be interpreted as an affinity measure capturing how densely connected vertices $x$ and $y$ are in the graph $G$. Thus providing a further motivation to analyze it.
%\MQ{Forse potrebbe essere un po' misleading chiamarla \emph{affinity measure}, visto che misura la \emph{non-affinita'}.} 
%\LA{Per me va bene lasciare cosi. E' chiaro cosa stiamo dicendo e come hai anche scritto: se una cosa misura non-affinita allora in qualche modo misura qualcosa di affinita' :) }

Still, the observable captured by $U_q(x,y)$ is not the only one inducing a natural notion of 2-point correlations associated to $\Pi_q$. For example, if we express the LEP-potential in Definition~\ref{FIP} as an expectation, i.e.
$U_q(x,y)= \E\left[\mathbf{1}_{\{B_q(x)\neq B_q(y)\}}\right]$, we may think of normalizing it with the masses of the related blocks and obtain another natural 2-point correlation function. This is captured in the following definition.

\begin{definition}[{\bf Pairwise RW-interaction potential}]\label{RWIP}
	For given $q>0$ and $G$, and any pair $x,y\in V$, we call \emph{pairwise RW-interaction potential} the following correlation function: 
	\begin{align}\notag	
	\overline{U}_q(x,y):=\E\left[\frac{\mathbf{1}_{\{B_q(x)\neq B_q(y)\}}}{\mu(B_q(x)) \mu(B_q(y))}\right],
	\end{align}
	where $\mu(\cdot)$ is the uniform measure on $V$.

\end{definition}

As we will see, the functional $\overline{U}_q$ is actually much simpler to analyze but it captures less insightful information on the underlying graph structure. Further, unlike $U_q$, this is not a probability, it is neither a metric, and it does not allow to derive a description of the macroscopic structure of $\Pi_q$. In a sense, the latter is not surprising, in fact (see Lemma~\ref{RWexpress}) this alternative correlation function can be expressed in terms of the sole RW Green's kernel without need to introduce the LEP $\Pi_q$. Note in particular that the uniform measure $\mu$ in Definition~\ref{RWIP}	 corresponds to the invariant measure of the RW $X$.

%Our choice relies on the fact that this $U_q$ is a natural metric and as we will see encodes  
% but that do not induce a metric space. 

\subsection{Related literature}
Several properties of the forest measure associated to the loop-erased partitioning have been derived in the recent~\cite{AG,AG1}. Based on these results, in~\cite[Prop. 6]{ACGM2} and~\cite[Sect. 5.2]{ACGM3}, the authors proposed an approach making use of the loop-erased partitioning and so-called intertwining dualities to describe the evolution of \emph{local equilibria} of a finite state space Markov chain. 

%From a broad perspective, the presence of the first factor in~\cref{LEPmeas} shows that this measure has the flavor of the celebrated \emph{Random Cluster Model}, or so-called \emph{FK-percolation}, see e.g.~\cite{G09}.
%Nonetheless, these objects are quite different. Indeed, our clusters are identified by oriented spanning trees rather than arbitrary undirected subgraphs, and from a practical view-point, unlike the FK-percolation, we do have at disposal an efficient exact sampling method. 

As mentioned before, this sampling method based on LERW is originally due to Wilson~\cite{W96} and shows that the measure considered herein is intimately related to the well-known \emph{Uniform Spanning Tree} (UST) measure. Actually the measure on spanning rooted forests mentioned in~\cref{Wilson} can be seen as a generalized version of the UST measure which is recovered by taking $q\downarrow 0$ when $w\equiv1$.
Therefore the results presented in this manuscript are along the line of the 
flourishing literature on statistical properties of the UST and LERW, see e.g.~\cite{A91,BK05,BP93,G80,K07,LS19,LSW04,P91,Pitman02,S00,S09,S19}.

A detailed exact and asymptotic analysis of observables related to Wilson's algorithm on a complete graph have been pursued in~\cite{P02}. The derivation of our results is in this spirit, although we deal with the additional randomness given by the presence of the killing parameter, which in turns makes the combinatorics more involved. 

We further mention that in dense geometries, the UST has been studied under the perspective of the continuous random tree topology on the complete graph~\cite{A91} and with respect to local weak convergence still on the complete graph~\cite{G80} and more recently on growing expanders admitting a limiting graphon\cite{HNT18}. These other interesting lines of investigation could also be naturally considered for the forest measure in~\cref{Wilson} but we will not pursue these approaches in this work. %It would be natural to look at the LEP along this direction but 

\subsection{Paper overview}
Our main theorems are presented in~\cref{results} and identify the LEP-potential in~\cref{FIP} and its asymptotics on a complete graph,~\cref{proporso}, and on a non-homogeneous complete graph with two communities,~\cref{2par2comssintpot,phasetrans}. Some consequences on the macroscopic emergent partition $\Pi_q$ on these mean-field models are derived in~\cref{macro}. The last result in~\cref{Rwdetection} concerns the asymptotics detectability related to the other 2-point correlation function in~\cref{RWIP}.
The concluding~\cref{proofcomplete,proof2com} are devoted to the proofs for the complete graph and the community model, respectively. 

\subsection{Basic standard notation}
In what follows we will use the following standard asymptotic notation.
For given positive sequences $f(N)$ and $g(N)$, we write:  
\begin{itemize}
	\item $f(N)=o(g(N))$ if $\lim_{N\to\infty}\frac{f(N)}{g(N)}=0$.
	\item $f(N)=O(g(N))$ if $\lim\sup_{N\to\infty}\frac{f(N)}{g(N)}<\infty$.
	\item $f(N)=\omega(g(N))$ if $\lim_{N\to\infty}\frac{f(N)}{g(N)}=\infty$.
	\item $f(N)=\Omega(g(N))$ if $\lim\inf_{N\to\infty}\frac{f(N)}{g(N)}>0$.
	\item $f(N)=\Theta(g(N))$ if $0< \lim\inf_{N\to\infty}\frac{f(N)}{g(N)}\le \lim\sup_{N\to\infty}\frac{f(N)}{g(N)}<\infty$.
	\item $f(N)\sim g(N)$ if $\lim_{N\to\infty}\frac{f(N)}{g(N)}=1$.
\end{itemize}
For $k\leq n\in \N$ we will denote by $(n)_{k}:=n(n-1)(n-2)\cdots(n-k)$ the descendent factorial.
Furthermore, we denote by $I$ the identity matrix, $\mathbf{1}$ and $\mathbf{1}'$, respectively, for the row and column vectors of all $1$'s, where the dimensions will be clear from the context. We will write $A^{Tr}$ for the \emph{transpose} of a matrix $A$.

\section{\large{Results: correlations and emerging partition on mean-field models}}\label{results}

Our first result characterizes the LEP-potential in absence of geometry for finite $N$, and shows that this probability is asymptotically non-degenerate at scale $\sqrt{N}$:  
\begin{theorem}\label{proporso}{\bf (Mean-field LEP-potential and limiting law)}
	Fix $q>0$ and let $\mathcal{K}_N$ be a complete graph on $N\geq 1$ vertices with constant edge weight $w>0$. Then, for all $x\neq y \in [N]$, 
	\begin{equation}\label{orsoformula}
	U^{(N)}_q(x,y)=U^{(N)}_q=\sum_{h=1}^{N-1}\frac{q}{q+Nw}\left(\frac{Nw}{q+Nw}\right)^{h-1}\prod_{k=2}^{h}\left(1-\frac{k}{N}\right),
	\end{equation}
	Furthermore, if $q=z\cdot w \sqrt{N}$, for fixed $z,w>0$, then 
	\begin{equation}\label{orsolimite}U_{q}:=\lim_{N\to\infty}U^{(N)}_{q}=\sqrt{2\pi}ze^{\frac{z^2}{2}}\P(Z>z),\end{equation} with $Z$ being a standard Gaussian random variable. %and $U^{(N)}_{q}$ as in~\cref{orsoformula}.
	%\end{corollary}
\end{theorem}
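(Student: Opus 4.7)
The plan is to exploit the vertex-transitivity of $\cK_N$ to evaluate both factors in the LERW decomposition \cref{LEdec} in closed form, and then to perform an explicit Riemann-sum asymptotic.

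For any simple path $\gamma\ni x$ with $y\notin\gamma$ of size $h:=|\gamma|$, the factor $\P_y(\tau_\gamma>\tau_q)$ depends only on $h$: at any vertex of $V\setminus\gamma$ the SRW jumps into $\gamma$ at total rate $hw$ and is killed at rate $q$, while transitions within $V\setminus\gamma$ do not affect the race between hitting $\gamma$ and being killed. A one-step memoryless argument therefore yields $\P_y(\tau_\gamma>\tau_q)=q/(q+hw)$.

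For the LERW factor I would use the forest representation in \cref{LEP}. A rooted spanning forest $F\in\cF$ compatible with Wilson's algorithm returning $\gamma=(v_0,\ldots,v_{h-1})$ killed at $v_{h-1}$ is exactly one containing the edges $v_i\to v_{i+1}$ for $0\le i\le h-2$ with $v_{h-1}$ a root. Factoring out the $q$ coming from the root at $v_{h-1}$ and the $w^{h-1}$ coming from the deterministic edges of $\gamma$, the sum over the remaining edge-completions equals $\det(qI+\cL|_{V\setminus\gamma})$ by the Matrix--Tree theorem (applied to the graph obtained by contracting $\gamma$ to a single absorbing vertex). Crucially, $\cL|_{V\setminus\gamma}$ is the principal submatrix of the Laplacian of $\cK_N$, which equals $NwI-wJ$ on $N-h$ vertices: its diagonal entries are $(N-1)w$, inherited from $\cK_N$, and not $(N-h-1)w$ coming from the induced subgraph $\cK_{N-h}$. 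The spectrum is $hw$ (on the all-ones vector) and $Nw$ with multiplicity $N-h-1$, so that, using $Z(q)=q(q+Nw)^{N-1}$,
\[
\P^{LE_q}_x(\Gamma=\gamma)=\frac{q\,w^{h-1}(q+hw)(q+Nw)^{N-h-1}}{Z(q)}=\frac{w^{h-1}(q+hw)}{(q+Nw)^h}.
\]
Grouping by size $h$ and counting $(N-2)(N-3)\cdots(N-h)=N^{h-1}\prod_{k=2}^h(1-k/N)$ simple paths from $x$ of size $h$ avoiding $y$, the two factors multiply and the $(q+hw)$'s cancel to give exactly \cref{orsoformula}.

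For the scaling statement \cref{orsolimite}, set $q=zw\sqrt{N}$ and rescale $h=\lfloor s\sqrt{N}\rfloor$. Then
\[
\frac{q}{q+Nw}=\frac{z}{z+\sqrt N}\sim\frac{z}{\sqrt N},\qquad\Bigl(\frac{Nw}{q+Nw}\Bigr)^{h-1}\to e^{-sz},\qquad\prod_{k=2}^h\Bigl(1-\frac{k}{N}\Bigr)\to e^{-s^2/2},
\]
by Taylor expansion of $\log(1-z/(z+\sqrt N))$ and of $\log(1-k/N)$. Therefore \cref{orsoformula} becomes a Riemann sum with mesh $1/\sqrt N$ converging to
\[
z\int_0^\infty e^{-sz-s^2/2}\,ds=z\,e^{z^2/2}\int_z^\infty e^{-u^2/2}\,du=\sqrt{2\pi}\,z\,e^{z^2/2}\,\P(Z>z),
\]
the last equality by completing the square $sz+s^2/2=\tfrac12(s+z)^2-\tfrac12 z^2$ and substituting $u=s+z$. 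The step I expect to be the most delicate is the Matrix--Tree computation of $\P^{LE_q}_x(\Gamma=\gamma)$---specifically, identifying the spectrum of $\cL|_{V\setminus\gamma}$, the (slightly counterintuitive) point being that its diagonal is $(N-1)w$ and not $(N-h-1)w$, which is exactly what produces the factor $(q+Nw)^h$ in the denominator and the clean geometric shape of \cref{orsoformula}. The passage to the scaling limit is then routine once dominated convergence is justified via a uniform Gaussian-type bound such as $\prod_{k=2}^h(1-k/N)\le e^{-h(h-1)/(2N)}$, which controls the contribution of $h\gg\sqrt N$.
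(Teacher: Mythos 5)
Your proposal is correct, and for the exact formula \cref{orsoformula} it takes a genuinely different route from the paper. The paper discretizes the killed walk and tracks the \emph{length} of the loop-erased trajectory directly as a Markov chain (the ``bear on a stair'' dynamics), obtaining $\P^{LE_q}_x(|\Gamma|\ge h)=(1-p)^{h-1}\prod_{i=1}^{h-1}(1-i/N)$ and then finishing with an Abel-summation manipulation; you instead compute $\P^{LE_q}_x(\Gamma=\gamma)$ for each individual path via Marchal's determinant formula \cite{M00}, diagonalize the principal submatrix $NwI-wJ$ explicitly (your observation that the diagonal stays $(N-1)w$, giving spectrum $\{hw,\,Nw^{(N-h-1)}\}$, is exactly right), and count paths avoiding $y$. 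The cancellation of the $(q+hw)$ factors against $\P_y(\tau_\gamma>\tau_q)=q/(q+hw)$ then reproduces \cref{orsoformula}, and your treatment of paths through $y$ (contributing zero) matches the paper's factor $\frac{N-h}{N-1}$. What your approach buys is uniformity of method: it is precisely the strategy the paper deploys for the harder two-community case in \cref{2par2comssintpot}, so the complete graph becomes the $w_2=w_1$ degeneration of that computation; what the paper's approach buys is a purely probabilistic derivation that avoids the matrix-forest theorem. For the scaling limit \cref{orsolimite} the two arguments are essentially the same Gaussian approximation, yours phrased as a Riemann sum with the domination bound $\prod_{k=2}^h(1-k/N)\le e^{-h(h-1)/(2N)}$, the paper's as convergence in distribution of $T_q/\sqrt N$ to an exponential; both are adequate, and your closed-form evaluation of the limiting integral by completing the square is the identity $\E[e^{-Y^2/2}]=\sqrt{2\pi}\,z\,e^{z^2/2}\P(Z>z)$ that the paper quotes without proof.
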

Notice that the critical scale $\sqrt{N}$ is the typical length of a LERW path with no killing and---as can be derived by the results in~\cite{P02}---is the typical length of the first branch of the Wilson's algorithm on the complete graph, when $q=O(\sqrt{N})$. 

Our second result is the analogous of~\cref{orsoformula} when still every vertex is accessible from any other, but the edge weights are non-homogeneous and give rise to a community structure. In this sense we will informally refer to this network as of a \emph{mean-field-community} model. 
Formally, for given positive reals $w_1$ and $w_2$, we denote by $\mathcal{K}_{2N}(w_1,w_2)$ the graph $G$ with $V=[2N]$, and $w(e)=w_1$ if $e=(x,y)$ is such that either $x,y\in[N]$ or 
$x,y\in[2N]\setminus[N]$, and $w(e)=w_2$ otherwise. Thus, the weight $w_1$ measures the pairwise connection intensity within the same community, while $w_2$ between pairs of nodes belonging to different communities.
Given the symmetry of the model, we will use the notation $U^{(N)}_{q}(out)$ to refer to the potential $U^{(N)}_{q}(x,y)$, for $x$ and $y$ in different communities. Conversely, we set $U^{(N)}_{q}(in)$ for the potential associated to two nodes belonging to the same community.

\begin{theorem}\label{2par2comssintpot}{\bf (LEP-potential for mean-field-community model) }
	Fix $q, w_1, w_2>0$ and consider a two-community-graph $\mathcal{K}_{2N}(w_1,w_2)$. Let $T_q\geq1$ be a geometric random variable with success parameter 
	$$\xi_{q,N}:=\frac{q}{q+N(w_1+w_2)}$$
	and 
	let $\tilde X=\(\tilde X_n\)_{n\in\N_0}$ be a discrete-time Markov chain with state space $\{\underline{1},\underline{2}\}$ and transition matrix
	%\begin{equation}
	$$\tilde P=\left(\begin{matrix}p&1-p\\1-p&p \end{matrix}\right),\quad p=\frac{w_1}{w_1+w_2}.$$ Denote by 
	$\ell(n)=\sum_{m<n}\ind_{\left\{\tilde X_m=\underline{1} \right\}}$ the corresponding local time in state $\underline{1}$ up to time $n$ and by $\tilde \P_{\underline{1}}$ the corresponding path measure starting from $\underline{1}$.  
	
	For $x \in[N]$, set  $\star= in$ if $y\in[N]$, and $\star=out$ if  $y\in[2N]\setminus[N]$,then 
	
	\begin{equation}
	\begin{aligned}\label{g}
	U^{(N)}_{q}(x,y)=	U^{(N)}_{q}(\star)
	:=
	\sum_{n\geq 1} \P(T_q=n)\sum_{k= 1}^{n}  \tilde P_{\underline{1}}(\ell(n)=k) N^{-n+1}\hat{f}(n,k)\theta(n,k)P^{\dagger}_{\star}(n,k)
	\end{aligned}
	\end{equation}
	where
	\begin{equation}\label{fandtheta}
	\hat{f}(n,k)= (N-2)_{k-1}(N-1)_{n-k},\quad\quad
	\theta(n,k)=
	\frac{\left(q-\lambda_1(n,k)\right)\left(q-\lambda_2(n,k)\right)}{q(q+2Nw_2)}\end{equation} 
	
	with, for $i=1,2$,
	
	\begin{equation}
	\lambda_{i}(n,k)=-\frac{1}{2}\left[w_1n+w_2N+(-1)^{i}\sqrt{w_1^2(2k-n)^2+4\left(N-k\right)\left(N-k\right)w_2^2}\right],
	\end{equation}
	
	and 
	
	\begin{equation}\label{Pmorte}
	P^{\dagger}_{\star}(n,k)=\frac{ q( q+k_\star(w_1-w_2)+w_2N)}
	{[q+k w_1][q+(n-k)w_1]+Nw_2(2q+nw_1)+w_2^2[Nn-k(n-k)]} \times \eta_\star\end{equation}
	
	with 
	
	\begin{equation}
	k_\star:=\begin{cases}
	k, & \text{ if } \star = out, \\
	n- k, & \text{ if } \star = in,
	\end{cases}
	\quad
	%\text{  and  }
	\quad\quad\quad
	\eta_\star=\begin{cases}
	(N-1)(N-n+k-1), & \text{ if } \star = out, \\
	N(N-k-1), &  \text{ if } \star = in.
	\end{cases}
	\end{equation}
\end{theorem}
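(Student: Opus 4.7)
The plan is to expand \cref{LEdec} by grouping self-avoiding paths $\gamma$ starting at $x$ according to $(n,k)$, where $n=|\gamma|$ and $k$ is the number of vertices of $\gamma$ in the same community as $x$. Vertex transitivity within each community of $\mathcal K_{2N}(w_1,w_2)$ makes both $\P_x^{LE_q}(\Gamma=\gamma)$ and $\P_y(\tau_\gamma>\tau_q)$ depend on $\gamma$ only through $(n,k)$ and, for the edge weight $w(\gamma)$, through the \emph{community sequence} $c(\gamma)\in\{\underline 1,\underline 2\}^n$.

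The first step is the LERW-path probability. The forest representation behind Wilson's algorithm (\cref{Wilson}) together with the matrix-tree theorem yields the identity
\begin{equation*}
\P_x^{LE_q}(\Gamma=\gamma) \;=\; \frac{q\,w(\gamma)\,\det\bigl[(qI+\mathcal L)|_{V\setminus\gamma}\bigr]}{\det[qI+\mathcal L]}.
\end{equation*}
Writing $qI+\mathcal L = [q+N(w_1+w_2)]\,I$ minus a block matrix whose within-community blocks are $w_1 J_N$ and between-community blocks are $w_2 J_N$ (with $J_N$ the $N\times N$ all-ones matrix) exposes a structure that survives under restriction to $V\setminus \gamma$, now with block sizes $(N-k,N-n+k)$. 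Consequently the restricted matrix has $2N-n-2$ eigenvalues equal to $q+N(w_1+w_2)$ and two further block-constant eigenvalues coming from an explicit $2\times 2$ reduced problem; these match $q-\lambda_i(n,k)$ of \cref{fandtheta}, and taking the quotient with $\det[qI+\mathcal L]$ (the same computation at $k=n=0$) produces the factor $\theta(n,k)(q+N(w_1+w_2))^{-n}$. The edge weight is then decomposed along the community sequence: each edge of $\gamma$ contributes $w_1$ or $w_2$ according to whether it stays inside or crosses between communities, so $w(\gamma)=(w_1+w_2)^{n-1}\tilde{\P}_{\underline 1}(c(\gamma))$; using $(q+N(w_1+w_2))^{-n}=\alpha^n/q^n$ rearranges everything into
\begin{equation*}
\P_x^{LE_q}(\Gamma=\gamma) \;=\; \P(T_q=n)\,N^{-(n-1)}\,\tilde{\P}_{\underline 1}(c(\gamma))\,\theta(n,k),
\end{equation*}
which is precisely the form making $T_q$ and $\tilde X$ surface naturally in \cref{g}.

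The escape probability $\P_y(\tau_\gamma>\tau_q)$ is handled by a first-step analysis of the killed SRW: by the same symmetry it depends only on the community of $y$ and on $(k,n-k)$, and conditioning on the first event from a generic vertex in each community of $V\setminus \gamma$ yields the $2\times 2$ linear system
\begin{align*}
(q+kw_1+Nw_2)\,f_1 - w_2(N-n+k)\,f_2 &= q,\\
(q+(n-k)w_1+Nw_2)\,f_2 - w_2(N-k)\,f_1 &= q.
\end{align*}
Cramer's rule returns precisely the polynomial fraction of \cref{Pmorte}: the system determinant is its denominator, and the two cases $\star=in$ and $\star=out$ correspond to reading off $f_1$ and $f_2$ respectively.

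It remains to enumerate. For fixed $(n,k)$ and a given community sequence with $\ell(n)=k$, the number of self-avoiding paths $\gamma$ compatible with it, starting at $x$ and avoiding $y$, factorises into a product of descending factorials in $N$ whose form depends only on $\star$; combined with the $N^{-(n-1)}$ already extracted, this collects into $\hat f(n,k)\cdot \eta_\star$. Summing over community sequences collapses the $\tilde{\P}_{\underline 1}$-weights into $\tilde{\P}_{\underline 1}(\ell(n)=k)$, and then summing over $k\in\{1,\dots,n\}$ and $n\geq 1$ gives \cref{g}. The main technical obstacle is the first step: evaluating the block-structured restricted determinant for every $(k,n-k)$ and matching its two non-trivial eigenvalues to $\lambda_i(n,k)$ is where the bulk of the algebra sits; the remaining steps are essentially careful bookkeeping.
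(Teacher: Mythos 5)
Your proposal follows essentially the same route as the paper's proof: Marchal's determinant formula for the killed-LERW path law, the block-eigenvalue computation of the restricted determinant yielding $\theta(n,k)$ and the factor $(q+N(w_1+w_2))^{-n}$ that turns into the geometric weight $\P(T_q=n)N^{-(n-1)}$, the reinterpretation of the edge-weight product as the local-time law of the two-state chain $\tilde P$, the combinatorial enumeration of paths compatible with a community sequence, and a lumped first-step analysis for the second walker --- your $2\times2$ Cramer system is exactly the paper's computation of $(I-Q)^{-1}D^{-1}\binom{q}{q}$ for the four-state absorbing chain. The identities you assert check out; the only discrepancy is that your Cramer computation gives $2Nw_2$ rather than $w_2N$ in the numerator of $P^{\dagger}_{\star}$, which matches the paper's own proof section and points to a typo in the theorem statement rather than a gap in your argument.
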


The above theorem is saying that the pairwise LEP-potential can be seen as the double-expectation of the function $g_{\star}(n,k)=N^{-n+1}
\left(\hat{f}\theta P^{\dagger}_{\star}\right)(n,k)$ in~\cref{g} with respect to the geometric time $T_q$ and to the local time of the coarse-grained RW $\tilde X$.
As can be seen in the proof, the analysis of this model can be in fact reduced to the study of such a coarse-grained RW jumping between the two ``lumped communities'' up to the independent random time $T_q$. The function $g_{\star}$ is the crucial combinatorial term encoding in the different parameter regimes the most likely trajectories for such a stopped two-state macroscopic walk $\tilde X$.

\begin{remark}{\bf (Extensions to many communities of arbitrary sizes and weigths) }
	The formula in~\cref{g} can be derived also for the general model with arbitrary number of communities of variable compatible sizes and arbitrary weights within and among communities. The corresponding statement and proof are more involved but they follow exactly the same scheme of this equal-size-two-community case captured in the above theorem. We refer the reader interested in such an extension to~\cite{Q16}.
\end{remark}

The next theorem gives the limit of the LEP-potential computed in~\cref{2par2comssintpot}, 
the resulting scenario is summarized in the phase-diagram in~\cref{fig:phdiag}. 

\begin{figure}[h]
	\includegraphics[width=14cm]{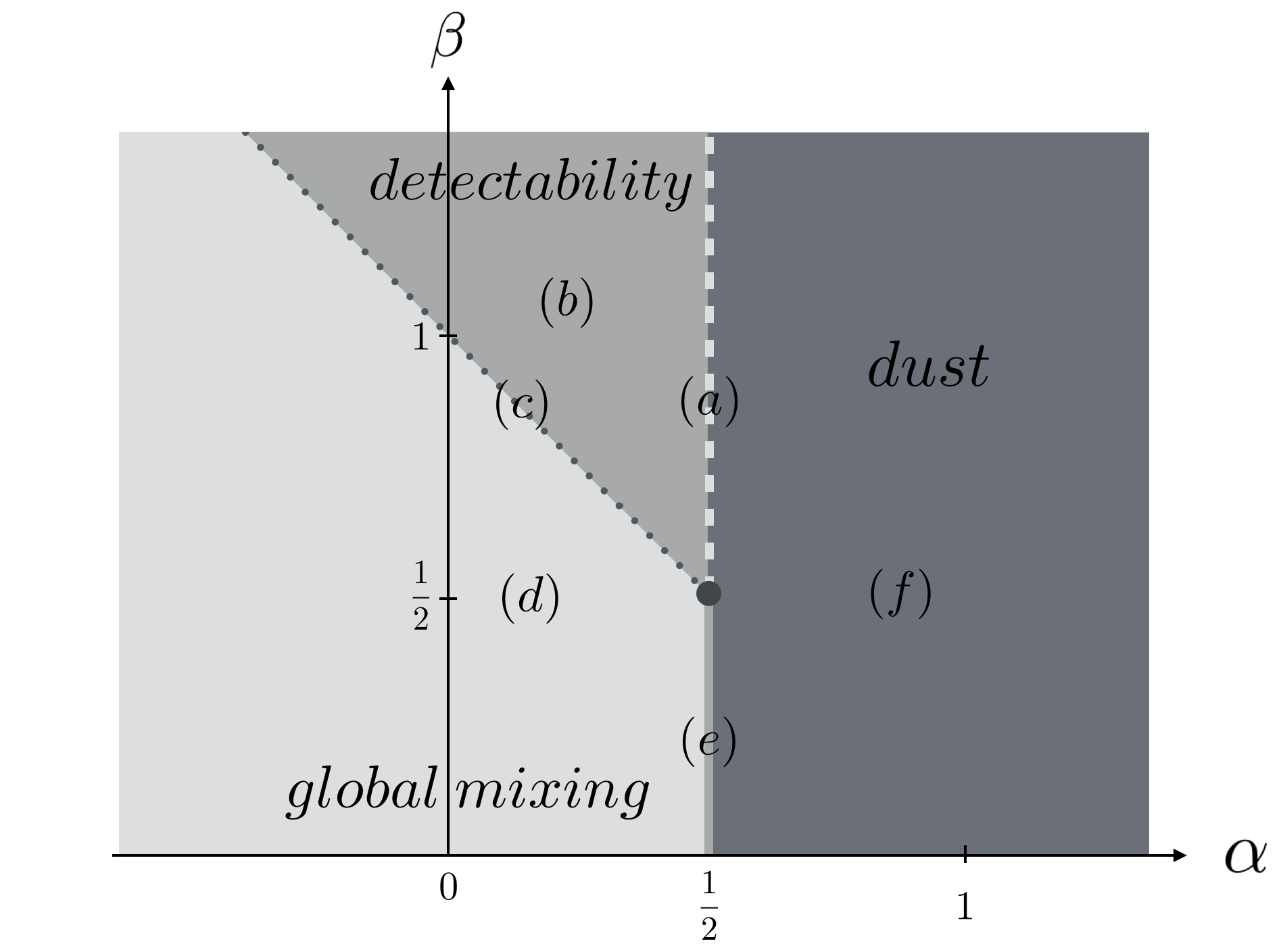}
	\caption{ $\alpha$--$\beta$ axis, $\alpha$ controls the killing rate ($q=N^\alpha$) and $\beta$ the weight between communities ($w_2=N^{-\beta}$).  	
	The above diagram describes at glance the limiting behavior of the LEP-potential as captured in~\cref{phasetrans}. 
The \emph{detectability} region (b) corresponds to the regimes where the difference of the \emph{in}- and \emph{out}-potential is maximal. In this case, indeed, the RW does not manage to exit its starting community within time scale $1/q$ and hence it is confined with high probability to ``its local universe''. In the \emph{dust} region (f) both \emph{in}- and \emph{out}-potential degenerates to 1, it is in fact a regime where the killing rate is sufficiently large (recall from~\cref{orsolimite} that $\sqrt{N}$ is the critical scale for the complete graph) to produce ``dust'' as emerging partition.
Finally, the \emph{global mixing} region (d) is the other degenerate regime where the RW ``mixes globally'' in the sense that it changes community many times within time scale $1/q$, hence loosing memory of its starting community. The separating lines (c)--(a)--(e) correspond to the delicate critical phases where the competition of the above behaviors occurs. This will become transparent in the proof in~\cref{detect} where such boundaries will deserve a more detailed asymptotic analysis.}
	\label{fig:phdiag}
\end{figure}

\begin{theorem}\label{phasetrans}{\bf (Detectability and phase diagram for two communities) }
	Under the assumptions of~\cref{2par2comssintpot}, set $w_1=1$ , $w_2=N^{-\beta}$ and $q=N^\alpha$ for some $\alpha\in\R,\: \beta\in\R^+$. 
	Then:
	\begin{itemize}
		\item[\bf{(a)}] if $1-\beta<\alpha=\frac{1}{2}$, $\lim_{N\to\infty}U^{(N)}_q(out)=1$ and $\lim_{N\to\infty}U^{(N)}_q(in)= \varepsilon_0(\beta)\in(0,1)$.
		
		\item[\bf{(b)}] if $1-\beta<\alpha<\frac{1}{2}$,
		$\lim_{N\to\infty}U^{(N)}_q(out)=1$ and $\lim_{N\to\infty}U^{(N)}_q(in)=0$.
		\item[\bf{(c)}] if $\alpha=1-\beta< \frac{1}{2}$, $\lim_{N\to\infty}U^{(N)}_q(out)=\varepsilon_2(\alpha,\beta)\in(0,1)$ and $\lim_{N\to\infty}U^{(N)}_q(in)=0$.
		\item[\bf{(d)}] if $\alpha<\min\{\frac{1}{2},1-\beta\}$, $\lim_{N\to\infty}U^{(N)}_q(\star)=0, \star\in\{in,out\}.$
		\item[\bf{(e)}] if $\alpha=\frac{1}{2}<1-\beta$, $\lim_{N\to\infty}U^{(N)}_q(\star)= \varepsilon_1(\alpha,\beta)\in(0,1)$ , $\star\in\{in,out\}$.
		\item[\bf{(f)}] if $\alpha>\frac{1}{2}$, $\lim_{N\to\infty}U^{(N)}_q(\star)=1, \star\in\{in,out\}.$
		%	$U^{(N)}_q(out)\to 0$ and $U^{(N)}_q(in)\to 0$.
		%	\item[\bf{(VII)}] If \textcolor{gray}{$\alpha=1-\beta=\frac{1}{2}$} then $\lim_{N\to\infty}U^{(N)}_q(\star)= \varepsilon_3(\alpha,\beta)\in(0,1)$ , $\star\in\{in,out\}$.
		%and $U^{(N)}_q(in)-U^{(N)}_q(in)\to 0$.
	\end{itemize}
\end{theorem}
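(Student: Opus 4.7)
The plan is to perform a Laplace-type asymptotic analysis of the closed formula \eqref{g} of \cref{2par2comssintpot} under the scaling $w_1=1$, $w_2=N^{-\beta}$, $q=N^\alpha$. The phase diagram emerges from the competition of three characteristic scales built into the formula: the typical length of the stopped LERW (governed by the geometric law of $T_q$), the community-switching rate of the coarse-grained chain $\tilde X$, and the combinatorial Gaussian cut-off produced by the factor $N^{-n+1}\hat f(n,k)$.

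First, I would pin down the scale of $n$. Under the scaling, $T_q$ has success probability $N^\alpha/(N^\alpha+N+N^{1-\beta})$, so $\mathbb{E}[T_q]\sim N^{\max\{0,1-\alpha\}}$ and the geometric weight decays exponentially on the scale $N^{1-\alpha}$. Expanding the descending factorials in $\hat f(n,k)=(N-2)_{k-1}(N-1)_{n-k}$ produces, uniformly in $k\in[1,n]$,
\[
N^{-n+1}\hat f(n,k)=\exp\!\left(-\frac{n^2}{2N}+O\!\left(\frac{n^3}{N^2}\right)\right),
\]
a Gaussian cut-off on the scale $\sqrt N$. Hence the effective support of the sum in $n$ is $n\lesssim n_{\mathrm{eff}}:=N^{\min\{1/2,\,1-\alpha\}}$, and the first critical line $\alpha=1/2$ of the diagram marks where these two scales coincide. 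In parallel, $\tilde X$ has $1-p=N^{-\beta}/(1+N^{-\beta})\sim N^{-\beta}$, so it performs $\sim nN^{-\beta}$ community switches in $n$ steps: thus $\ell(n)/n\to 1$ in probability if $n\ll N^\beta$ (``confined'' regime, LERW remains in its starting community) and $\ell(n)/n\to 1/2$ if $n\gg N^\beta$ (``ergodic'' regime, LERW spans both communities equally). An analogous dichotomy, read off the $y$-walk rather than the $x$-LERW, identifies the second critical line $\alpha=1-\beta$ via the condition $N^{1-\alpha-\beta}\asymp 1$ on the expected number of community crossings the SRW from $y$ performs before being killed.

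Next, I would insert the leading asymptotics of $\theta(n,k)$ and $P^{\dagger}_\star(n,k)$ from \eqref{fandtheta}--\eqref{Pmorte} in each regime. In the confined regime ($k\sim n$) one has $|\lambda_1(n,k)|=O(N^{-\beta}n)$ and $|\lambda_2(n,k)|\sim n$, so $\theta\to 1$ and $P^{\dagger}_{\star}$ collapses into the one-community survival probability of a RW of length $n$ over $N$ sites; in the ergodic regime the symmetric value $k\sim n/2$ makes $\lambda_{1,2}$ and the denominator of $P^{\dagger}_\star$ forget the starting community, so that $P^{\dagger}_{in}$ and $P^{\dagger}_{out}$ share the same limit. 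Inserting these expansions into \eqref{g} and applying dominated convergence yields the degenerate limits in the open cases (b), (d), (f): in (f), $n_{\mathrm{eff}}=N^{1-\alpha}\ll\sqrt N$ and both hitting probabilities vanish, so $U\to 1$; in (b) both walks are confined, whence $U(out)\to 1$, while the $in$-hitting probability of a $\sqrt N$-size $\gamma$ by a SRW of length $N^{1-\alpha}$ saturates and $U(in)\to 0$; in (d) the $y$-SRW is ergodic regardless of whether the LERW is confined, its expected number of hits of $\gamma$ in either community is of order $N^{1/2-\alpha}\to\infty$, forcing both potentials to $0$.

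The main obstacle is the analysis of the three critical lines (a), (c), (e), where two of the three scales coincide and all factors must be expanded to the first non-trivial order before summation. On lines (a) and (e) the substitution $n=z\sqrt N$ turns the sum over $n$ into a Riemann sum converging to a Gaussian integral in the spirit of \eqref{orsolimite}; the integrand is $z\mapsto z\,e^{-z^2/2}$ multiplied by the (one- or two-community) escape probability of the $y$-SRW, producing $\varepsilon_0(\beta)$ and $\varepsilon_1(\alpha,\beta)$. On the delicate line (c), $n$ is of order $N^{1-\alpha}=N^\beta$ and the small parameter $nN^{-\beta}\to c\in(0,\infty)$ means that $\tilde X$ performs only $O(1)$ community switches along the LERW; consequently, the local-time fluctuations around $\ell(n)/n$ must be tracked exactly and matched with the community-dependent denominator of $P^{\dagger}_{out}$ in \eqref{Pmorte}, which is what yields the coupled expression for $\varepsilon_2(\alpha,\beta)$. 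Uniform control of the error terms on these three critical lines, together with careful bookkeeping of the coupling between $T_q$, $\ell(n)$ and the combinatorial factor, is the most technical step.
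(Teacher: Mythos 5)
Your identification of the three competing scales (geometric scale $N^{1-\alpha}$, Gaussian cut-off $\sqrt N$ from $N^{-n+1}\hat f$, switching scale $N^\beta$) and of the two critical lines is correct, and for cases (d), (e), (f) your splitting of the sum in \eqref{g} matches the paper's strategy. Note that the paper does \emph{not} run everything through \eqref{g}: for (a) and (b) it proves a confinement lemma (the SRW exits its community before being killed with probability $\Theta(N^{1-\beta-\alpha})=o(1)$), deduces $U^{(N)}_q(out)\to1$ directly from \cref{LEdec}, and reduces $U^{(N)}_q(in)$ to the complete-graph limit \cref{orsolimite}; for (f) it uses the soft bound $\P^{LE_q}_x(|\Gamma|>\sqrt N)=o(1)$. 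Your unified analytic route is legitimate in principle, but "dominated convergence" cannot be invoked as stated: $\theta(n,k)$ is unbounded (of order $n/N^\alpha$ in the confined regime), and in case (b) the limit $U^{(N)}_q(out)\to1$ arises from an exact compensation between the vanishing geometric mass $\P(T_q\le\sqrt N)\sim N^{\alpha-1/2}$ and the diverging factor $\theta P^\dagger_{out}$; you would have to track this cancellation quantitatively, which is precisely what the paper's probabilistic shortcut avoids.

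There is a genuine error in your treatment of line (c). You place the effective length at $n\asymp N^{1-\alpha}=N^{\beta}$ and claim $\tilde X$ makes $O(1)$ switches whose fluctuating local time must be matched against $P^{\dagger}_{out}$. But on line (c) one has $\alpha<\tfrac12$, hence $\beta=1-\alpha>\tfrac12$, so the Gaussian cut-off truncates the sum at $n\lesssim\sqrt N\ll N^{\beta}$; on that range $n(1-p)\sim nN^{-\beta}\to0$, so with high probability $\tilde X$ makes \emph{no} switch at all, i.e.\ $\tilde\P_{\underline1}(\ell(n)=k)=\delta_{k,n}+o(1)$ — this is exactly how the paper kills $U^{(N)}_q(in)$. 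The nondegenerate constant $\varepsilon_2$ for $U^{(N)}_q(out)$ does not come from local-time fluctuations of the LERW but from the balance $\sum_{n\le\sqrt N}\P(T_q=n)\cdot\Theta(n/N^{\alpha})=\Theta(1)$, i.e.\ vanishing geometric mass against the diverging factor $\theta P^{\dagger}_{out}=\Theta(1+k_\star/N^{\alpha})$ with $k_\star=k\sim n$; the $O(1)$-crossings heuristic you invoke is the right one for the walk started at $y$ (already encoded in $P^{\dagger}$), not for the coarse-grained chain of the LERW from $x$. As written, your matching on line (c) would not produce the correct limit and needs to be replaced by this mass-versus-amplitude balance (plus, for the lower and upper bounds $\varepsilon_2\in(0,1)$, arguments like the paper's events $B_x,B_y'$ and the explicit estimate $\frac1N\sum_{n\le\sqrt N}n\le\frac12+o(1)$).
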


\begin{remark}{\bf (Anticommunities for negative $\beta$)}
	The above theorem is stated for arbitrary $\alpha\in\R$ and $\beta>0$. We notice that while for $\beta=0$ we are back to the complete graph with constant weight 1, for $\beta<0$, it would be more appropriate to speak about ``anticommunities'' rather than communities. In fact in this case, at every step, the RW prefers to change community rather than staying in its original one. Thus, it is somewhat artificial to see what the loop-erased partitioning  captures. This is the reason why the plot in~\cref{fig:phdiag} is restricted to $\beta\geq 0$. However, the theorem still remains valid for negative $\beta$ and, not surprisingly, the difference between the \emph{in} and \emph{out} potentials turns out to be zero.
\end{remark}

The next statement collects some simple consequences, deduced from these two-point LEP-potential, on the macroscopic structure of $\Pi_q$. We recall that $|\Pi_q|$ stands for the number of blocks in the random partition $\Pi_q$.

\begin{corollary}\label{macro}{\bf (Macroscopic emergent structure)}
	Under the assumption of~\cref{phasetrans}, the following scenarios hold true. 
	If $\beta>0$, there exists $c>0$ depending only on $\alpha$ and $\beta$ s.t.
	$$\P\left(|\Pi_q|=cN^{\alpha\wedge1}(1\pm o(1) ) \right)=1-o(1).$$
	Moreover:
	\begin{itemize}
		\item[\bf{(a)}] if $1-\beta<\alpha=\frac{1}{2}$ then $\whp$ there are two blocks of linear size s.t. each block has a fraction $(1-o(1))$ of vertices from the same community.
		\item[\bf{(b)}] if $1-\beta<\alpha<\frac{1}{2}$ then $\whp$ there are two blocks of size $N(1-o(1))$ s.t. each block has a fraction $(1-o(1))$ of vertices from the same community.
		\item[\bf{(c)}] if $\alpha=1-\beta< \frac{1}{2}$ then $\whp$ there is at least a block of linear size.
		\item[\bf{(d)}] if $\alpha<\min\{\frac{1}{2},1-\beta\}$ then $\whp$ there is one block of size $2N(1-o(1))$.
		\item[\bf{(e)}] if $\alpha=\frac{1}{2}<1-\beta$ then $\whp$ there is at least a block of linear size.
		\item[\bf{(f)}] if $\alpha>\frac{1}{2}$ then $\whp$ blocks of linear size do not exist.	
		%	\item[\bf{(VII)}] If \textcolor{gray}{$\alpha=1-\beta=\frac{1}{2}$} then $\whp$ there is a least a block of linear size.
	\end{itemize}
\end{corollary}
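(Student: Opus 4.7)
The plan is to combine two complementary ingredients: the spectral representation of $|\Pi_q|$ recalled after \cref{LEP}, which fixes the total number of blocks up to concentration, and the two-point potentials of \cref{2par2comssintpot,phasetrans}, which fix the second-order statistics of the block sizes.

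\medskip

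For the estimate on $|\Pi_q|$ I would first diagonalise the Laplacian of $\mathcal{K}_{2N}(w_1,w_2)$. Writing $\mathcal{L}$ in the natural $2\times 2$ block form with diagonal block $A$ and off-diagonal block $B=-w_2 J_N$, the $\mathbb{Z}_2$-symmetry that swaps the two communities simultaneously diagonalises $\mathcal{L}$ into $A\pm B$; a direct computation yields the eigenvalues $0$ (once), $2Nw_2$ (once) and $N(w_1+w_2)$ (with multiplicity $2N-2$). Plugging these into the Bernoulli decomposition one gets
\[
\E|\Pi_q|=1+\frac{q}{q+2Nw_2}+(2N-2)\,\frac{q}{q+N(w_1+w_2)},
\]
which under $w_1=1$, $w_2=N^{-\beta}$, $q=N^\alpha$ with $\beta>0$ is readily checked to be $c(\alpha,\beta)\,N^{\alpha\wedge 1}(1+o(1))$ for an explicit positive constant $c$ (the three sub-cases $\alpha\gtreqless 1-\beta$ have to be matched against $\alpha\gtreqless 1$). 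A Chernoff bound for sums of independent Bernoullis with divergent mean then delivers the concentration of $|\Pi_q|$ claimed in the corollary.

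\medskip

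For the structural statements (a)--(f) I would focus on the three random sums
\[
S:=\sum_{i}n_i^2,\qquad S^{(j)}:=\sum_{i}(n_i^{(j)})^2,\qquad M:=\sum_{i}n_i^{(1)}n_i^{(2)},
\]
where $n_i^{(j)}:=|B_i\cap \text{community }j|$ for $j\in\{1,2\}$. Each of these has mean that is a linear combination of $1-U_q^{(N)}(in)$ and $1-U_q^{(N)}(out)$, whose limits are given by \cref{phasetrans}. Once concentration around the mean is established, the elementary inequalities
\[
n_{\max}\geq \tfrac{S}{2N},\qquad n_{\max}^{(j)}\geq \tfrac{S^{(j)}}{N},\qquad n_{\max}^2\leq S,\qquad n_i^{(1)}n_i^{(2)}\leq M,
\]
deliver (a)--(f) by a case split in $(\alpha,\beta)$: cases (d), (f) (where $U_q^{(N)}(\star)$ tends to $0$ or $1$) are handled by Markov alone; in cases (a), (b) the limit $U_q^{(N)}(out)\to 1$ gives $\E M=o(N^2)$, which forces every linear-sized block to be essentially community-pure and forces the two giant blocks to be distinct; the existence of linear blocks in (a), (c), (e) follows from $n_{\max}\geq S/(2N)$ and $n_{\max}^{(j)}\geq S^{(j)}/N$ combined with the non-degenerate limits of \cref{phasetrans}; and the non-existence of linear blocks in (f) follows from $n_{\max}^2\leq S=o(N^2)$.

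\medskip

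The principal obstacle is the variance/second-moment estimate in the interior regimes (a), (c), (e), where the naive Chebyshev bound on $S, S^{(j)}, M$ is too crude and one genuinely needs that the three- and four-point ``different-block'' probabilities asymptotically factorise into products of two-point potentials. Proving this requires extending the LERW expansion behind \cref{g} to a Wilson exploration started from three or four distinct vertices, and tracking the interplay between the random killing time $T_q$ and the local times of the coarse-grained two-state walk for several simultaneous trajectories. The combinatorial bookkeeping is heavier but structurally parallel to the proof of \cref{2par2comssintpot}; the dominant terms isolate exactly as there, and the corrections absorb into the $(1+o(1))$ factor.
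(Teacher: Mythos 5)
Your treatment of the first half coincides with the paper's: the same three eigenvalues $0$, $2Nw_2$ and $N(w_1+w_2)$ (the last with multiplicity $2N-2$) are fed into the Bernoulli decomposition of $|\Pi_q|$ from \cite[Prop.~2.1]{AG}, the mean is computed to be $\Theta(N^{\alpha\wedge 1})$, and a multiplicative Chernoff bound gives concentration. For the structural items (a)--(f) the paper offers no argument at all (it declares them a ``trivial consequence'' of \cref{phasetrans}), so your second-moment strategy via $S$, $S^{(j)}$ and $M$ is a genuine attempt to supply what the paper omits. It does work, with first moments only, in every regime where the relevant potential degenerates: in (f) Markov applied to $\E S=2N+o(N^2)$ excludes linear blocks; in (d) reverse Markov on the bounded variable $4N^2-S$ forces $S=4N^2(1-o(1))$ and hence a block of size $\ge S/(2N)=2N(1-o(1))$; and in (b) \emph{and also in (c)} the limit $U^{(N)}_q(in)\to 0$ forces $S^{(j)}=N^2(1-o(1))$ \whp, so $n^{(j)}_{\max}\ge S^{(j)}/N$ produces the linear (and, after combining $\E M=o(N^2)$ with Markov in case (b), community-pure and distinct) blocks. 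You have therefore grouped (c) with the hard cases unnecessarily.

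The genuine gap is in (a) and (e), where the potential governing the existence claim converges to a constant strictly inside $(0,1)$. There $\E S^{(j)}=\Theta(N^2)$ (resp.\ $\E S=\Theta(N^2)$) but the variable is not pushed to its maximum, so the inequality $n^{(j)}_{\max}\ge S^{(j)}/N$ yields a linear block only after one proves $S^{(j)}=\Omega(N^2)$ \emph{with probability} $1-o(1)$, e.g.\ via Paley--Zygmund, and that requires $\E[(S^{(j)})^2]=(1+o(1))(\E S^{(j)})^2$, i.e.\ asymptotic factorization of the four-point ``same-block'' probabilities. You name this obstacle honestly, but the proposal only asserts that the expansion behind \cref{g} extends to three or four starting points with ``the dominant terms isolating exactly as there''; nothing substantiates this, and it is not a routine corollary of \cref{2par2comssintpot}, since the joint law of several Wilson explorations involves conditioning each new walk on the whole union of previously grown trees rather than on two-point data. (An alternative that bypasses four-point functions is to run Wilson's algorithm from the vertices of one community and use the confinement estimate of \cref{lemmageom} to show directly that a positive fraction of them attach to the first tree \whp.) As written, your proof of (a) and (e) is incomplete --- though the same gap is left open, silently, by the paper itself.
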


\cref{phasetrans} says that the LEP--potential contains sufficient information to detect the underlying communities in a parametric region where the ratio of the {\em out} and {\em in} weights is bigger than $\sqrt{N}$. This suggests that estimating the probabilities in~\cref{FIP} could be a valuable method to design a community detection algorithm for well-separated regions. Nonetheless, there can be other  observables associated to $\Pi_q$ which perform better, meaning e.g. that they can be used for detection beyond regions ({\bf a})--({\bf c}) in~\cref{fig:phdiag}. %Our last result,~\cref{Rwdetection}, shows that this is in fact the case for the RW-potential in~\cref{RWIP}. 
However, it is not the scope of this paper to explore the practical applications and implications of this loop-erased partitioning in the context of community detection. For this reason we will omit complexity and other algorithmic considerations. As already mentioned, our main goal is rather to start understanding analytically the measure $\mu_q$ and its emergent structure. 
%For the latter, the LEP-potential is more informative than the RW-potential as captured in~\cref{macro}.

Our last result,~\cref{Rwdetection}, is the analogous of~\cref{phasetrans} for the RW-potential in~\cref{RWIP} and shows that this other potential gives essentially no insight on the emergent partition and very little can be detected from it. %For the latter, the resulting limit is non degenerate only if $q=\Theta(1)$, and in this specific regime detectability occurs---as for the LEP-potential--- when $\beta>1$. 
To state the result, we first give in the next lemma a characterization of the RW-potential which reveals that in reality this other 2-body interaction is determined only by the RW flow in the graph  rather than the LEP--measure.

\begin{lemma}\label{RWexpress}{\bf (RW--potential independent of LEP structure)}
For any arbitrary graph $G$ on $N$ vertices, the pairwise correlation function in~\cref{RWIP} admits the following representation:
$$ \overline{U}_q(x,y)=N^2 \left[K_q(x,x)K_q(y,y)-K_q(x,y)K_q(y,x)\right],$$
where  
$$K_q(x,y):= q(q-\mathcal{L})^{-1}(x,y)= \mathbb{P}_x(X(\tau_q)=y) $$
is, up to the factor $q$, the Green's kernel of the RW $X$ stopped at an independent exponentially distributed time $\tau_q$, with rate $q$.
\end{lemma}

We can now state the detectability captured by this RW--potential in the mean-field-community model. As for the LEP-potential we adapt the notation $\overline{U}_q( in/out )$ to distinguish between pairs within the same community or not.

\begin{proposition}\label{Rwdetection}{\bf (Detectability via RW--potential)}
Consider the two--community--graph $\mathcal{K}_{2N}(w_1,w_2)$ with  $w_1=1$, $w_2=N^{-\beta}$ and  $q=\Theta(N^{\alpha})$.
Then, if $\alpha\le0$ and $\beta>1-\alpha$
 $$ \overline{U}_{q}(\star)\sim \begin{cases}
4q^2+8 q&\text{ if } \star= in ,\\
4q^2+8 q+4 & \text{ if } \star= out.
\end{cases}$$
On the other hand:  

%if $\alpha\le0$ and $\beta<1-\alpha$, then $\overline{U}_{q}(in)\sim\overline{U}_{q}(out)\sim 4q^2+4q$. 

$$\overline{U}_{q}(in)\sim\overline{U}_{q}(out)\sim\begin{cases}
4q(q+1)&\text{ if } \alpha\le0 \text{ and } \beta<1-\alpha, \\
N^{\max\{2,2\alpha\}} & \text{ if } \alpha>0.
\end{cases}$$

%Moreover, if $\alpha>0$, then $\overline{U}_{q}(in)\sim\overline{U}_{q}(out)\sim N^{\max\{2,2\alpha\}}$.
% \begin{equation}\label{eq:div-a>0}
 %\lim_{N\to\infty}\overline{U}_{q}(in)=\lim_{N\to\infty}\overline{U}_{q}(out)=\infty.
 %\end{equation}
\end{proposition}

As anticipated, this last statement shows that this RW-potential is less informative than the LEP one.
In particular, the detectable parametric region is narrower and corresponds to the triangle for $\alpha\leq 0$ in the detectable region depicted in~\cref{fig:phdiag}. 

\section{Proofs of~\cref{proporso}: homogeneous complete graph}\label{proofcomplete}

\subsection*{Proof of~\cref{orsoformula}}
For convenience, we consider a discretization of the continuous time Markov process with generator
\begin{equation}\label{def:lap}
\cL=\cA-\cD,\quad\text{ with }\quad \cA=w(\mathbf{1}\mathbf{1}'-I) \quad
 \text{ and }\quad\text{ with }  \cD=(n-1)wI.
\end{equation}
Set $L=\frac{1}{Nw}\cL$, so that $L=I-\frac{1}{N}\mathbf{1}\mathbf{1}'$
and the associated transition matrix is given by
\begin{equation}
P=I-L=\frac{1}{N}\mathbf{1}\mathbf{1}'
\end{equation}
If we consider the killing as an absorbing state within the state space of the Markov chain extended from $V$ to $V\bigcup\{\Delta\}$, $\Delta$ denoting this absorbing state, we get the adjacency matrix
\begin{equation}
\widehat \cA=\left(\begin{matrix}
w\mathbf{1}\mathbf{1}'&q\mathbf{1}\\
\mathbf{0}'&0
\end{matrix}\right),
\end{equation}
and generator
\begin{equation}
\widehat \cL= \widehat \cA-\widehat \cD,\qquad \widehat \cD=\left(\begin{matrix}
[(N-1)w+q]I&\mathbf{0}\\
\mathbf{0}'&0
\end{matrix}\right).
\end{equation}
We can then normalize it by setting
\begin{equation}
\widehat L=\frac{1}{Nw+q}\widehat \cL=\left(\begin{matrix}
\frac{w}{Nw+q}\mathbf{1}\mathbf{1}'-I&\frac{q}{Nw+q}\mathbf{1}\\
\mathbf{0}'&0
\end{matrix}\right)
\end{equation}
and get a discrete RW with transition matrix given by
\begin{equation}\label{discrete}
\widehat P= I-\widehat L=\left(\begin{matrix}
\frac{w}{Nw+q}\mathbf{1}\mathbf{1}'&\frac{q}{Nw+q}\mathbf{1}\\
\mathbf{0}'&1
\end{matrix}\right)=\left(\begin{matrix}
(1-p)\frac{1}{N}\mathbf{1}\mathbf{1}'&p\mathbf{1}\\
\mathbf{0}'&1
\end{matrix}\right),
\end{equation}
where 
\begin{equation}\label{p}
r:=\frac{q}{Nw + q}.
\end{equation}
It should be clear that a sample of a LE-path starting at a given vertex can be obtained as the output of the following procedure:
\begin{itemize}
	\item With probability $r$ the discrete process reaches the absorbing state. In particular we set $T_q$ for a geometric random variable of parameter $q/(Nw+q)$. 
	\item With probability $1-r$ the LERW moves accordingly to the law $P(v,\cdot)$ where $v$ is the last reached node.
	\item We call $H_n$ the vertices covered by the LE-path up to time $n$. 
Then, if at time $n+1$ the transition $X_n\to X_{n+1}$ takes place and the vertex $X_{n+1}\not\in H_n$, then $ H_{n+1}=H_n\cup\{X_{n+1} \}$. Conditioning on $|H_n|$, the latter event occurs with probability $\frac{N- H_n}{N}$.
Conversely, if $X_{n+1}\in H_n$, then we remove from $H_n$ all the vertices that has been visited by the LERW since its last visit to $X_{n+1}$. As consequence the quantity $|H|$ reduces. One can then compute that the reductions occur with law
	\begin{equation}
	\P\left( |H_{n+1}|=h\:|\:|H_n|\ge h, T_q>n+1 \right)=\frac{1}{N}.
	\end{equation}
\end{itemize}
It would be easier to look at the quantity $|H_{n}|$ by using the following metaphor. We interpret  $|H_{n}|$ as the height from which a bear fall down while moving on a stair of height $n$. In particular, we will assume that
\begin{itemize}
	\item The bear starts with probability 1 from the first stair.
	\item At each time the bear select a step of the stair uniformly at random, including also the step he currently stands on.
	\item If the choice made by the bear is a lower step (or the current one), he moves to that step.
	\item If he chooses an upper step, then he walks in the upper direction by a single step.
	\item Before doing each step, there is a probability $r$ as in~\cref{p} that the bear ``falls down''.
\end{itemize}

Let us next fix $q=0$, that is, $r=0$, so that we can study the bear's dynamic independently of his falling. By setting $Z(n)$ for the position of the bear at time $n\in \N$, we get
\begin{align}
\P(Z(0)=\cdot)=&\left(1,0,0,0,\dots,0\right)\\
\P(Z(1)=\cdot)=&\left(\frac{1}{N},1-\frac{1}{N},0,0,\dots,0\right)\\
\P(Z(2)=\cdot)=&\left(\frac{1}{N},\left(1-\frac{1}{N}\right)\frac{2}{N},\left(1-\frac{1}{N}\right)\left(1-\frac{2}{N}\right),0,\dots,0\right)\\
\P(Z(3)=\cdot)=&\left(\frac{1}{N},\left(1-\frac{1}{N}\right)\frac{2}{N},\left(1-\frac{1}{N}\right)\left(1-\frac{2}{N}\right)\frac{3}{N},\left(1-\frac{1}{N}\right)\left(1-\frac{2}{N}\right)\left(1-\frac{3}{N}\right),\dots,0\right)\\
\P(Z(n)=\cdot)=&\begin{cases}
\left(1-\frac{1}{N} \right)\left(1-\frac{2}{N}\right)\cdots \left(1-\frac{h-1}{N} \right)\frac{h}{N}&\text{ if }n\ge h\\
\left(1-\frac{1}{N} \right)\left(1-\frac{2}{N}\right)\cdots \left(1-\frac{h-1}{N} \right)&\text{ if }n=h-1\\
0&\text{ if }n<h -1.
\end{cases}
\end{align}
The latter implies that at time $n=h$ we reached the ergodic measure over the first $h$ steps of the stair, while at time $n=N$ the probability measure is exactly the ergodic one.

It is interesting to notice that an easier expression can be written for the cumulative distribution of the variable $Z(n)$, i.e.
\begin{equation}
\P\left\{Z(n)\ge h\right\}=\begin{cases}
\left(1-\frac{1}{N} \right)\left(1-\frac{2}{N}\right)\cdots \left(1-\frac{n-1}{N} \right)&\text{ if }n\ge h-1\\
0&\text{ if }n<h -1\\
\end{cases}
\end{equation}
Next, calling $T^-$ the time immediately before the bear falls, we get
\begin{align}
\nonumber\P\left\{Z(T^-)\ge\zeta \right\}=&\P\left\{T^-<h-1 \right\}\P\left\{Z(T^-)\ge h| T^-<n-1 \right\}+\P\left\{T^-\ge h-1 \right\}\P\left\{Z(T^-)\ge h| T^-\ge n-1 \right\}\\
=&0+(1-r)^{h-1} \left(1-\frac{1}{N} \right)\left(1-\frac{2}{N}\right)\cdots \left(1-\frac{h-1}{N}\right)
\end{align} 
which gives us the distribution of the last step of the bear before his failing. Recall that this is equivalent to the length of the  original LERW starting on $x\in \cK_{N}$, when the walk is stopped at an exponential time of rate $q$. Hence, we are now left to compute the probability that another walker, starting on $y\not= x$, is killed before it hits the previously sampled LERW.

Thanks to the bear metaphor, for the size of the LE-trajectory we get:
\begin{equation}
\P^{LE_q}_x(|\Gamma|\geq h)=(1-r)^{h-1}\prod_{i=1}^{h-1}\left(1-\frac{i}{N}\right)
\end{equation}
and by explicit computation, setting $T_\Gamma$ for the first hitting time of the LE-path $\Gamma$,
\begin{align*}
U^{(N)}_q(x,y)=&\sum_{h\geq 1}^{N-1}\P^{LE_q}_x(|\Gamma|= h)\P_y(T_q<T_\Gamma | |\Gamma|=h)\\
=&\sum_{h=1}^{N-1}\P^{LE_q}_x(|\Gamma|= h)[\P_y(T_q<T_\Gamma| |\Gamma|=h, y\in \Gamma)\P(y\in \Gamma ||\Gamma|=h)\\
&+\P_y(T_q<T_\Gamma||\Gamma|=h, y\notin \Gamma)\P(y\notin \Gamma| |\Gamma|=h)]\\
=&\sum_{h=1}^{N-1}\P^{LE_q}_x(|\Gamma|= h)\left(\frac{q}{q+hw}\right)\frac{N-h}{N-1}\\
=&\sum_{h=1}^{N-1}\P^{LE_q}_x(|\Gamma|\geq h)\left(\frac{q}{q+hw}\right)\frac{N-h}{N-1}-\sum_{h=1}^{N-1}\P_x(|\Gamma|\geq h+1)\left(\frac{q}{q+hw}\right)\frac{N-h}{N-1}\\
=&\sum_{h=1}^{N-1}\left[\left(\frac{Nw}{q+Nw}\right)^{h-1}\prod_{i=1}^{h-1}\left(1-\frac{i}{N}\right)\right]\left(\frac{q}{q+hw}\right)\frac{N-h}{N-1}+\\
&-\sum_{h=1}^{N-1}\left[\left(\frac{Nw}{q+Nw}\right)^{h}\prod_{i=1}^{h}\left(1-\frac{i}{N}\right)\right]\left(\frac{q}{q+hw}\right)\frac{N-h}{N-1}\\
=&\sum_{h=1}^{N-1}\frac{q}{q+Nw}\frac{N-h}{N-1}\left(\frac{Nw}{q+Nw}\right)^{h-1}\prod_{i=1}^{h-1}\left(1-\frac{i}{N}\right)\left[1-\frac{Nw}{Nw+q}\left(\frac{N-h}{N}\right)\right]\\
=&\sum_{h=1}^{N-1}\frac{q}{q+hw}\frac{N-h}{N-1}\left(\frac{Nw}{q+Nw}\right)^{h-1}\prod_{i=1}^{h-1}\left(1-\frac{i}{N}\right)\left(\frac{q+hw}{q+Nw}\right)\\
=&\sum_{h=1}^{N-1}\frac{q}{q+Nw}\left(\frac{Nw}{q+Nw}\right)^{h-1}\frac{N-h}{N-1}\prod_{i=1}^{h-1}\left(1-\frac{i}{N}\right)\\
=&\sum_{h=1}^{N-1}\frac{q}{q+Nw}\left(\frac{Nw}{q+Nw}\right)^{h-1}\prod_{i=2}^{h}\left(1-\frac{i}{N}\right)\\
=&\sum_{k=0}^{N-2}\frac{q}{q+Nw}\left(\frac{Nw}{q+Nw}\right)^{k}\prod_{i=2}^{k+1}\left(1-\frac{i}{N}\right).
\end{align*}
%\end{proof}
\qed

\subsection*{Proof of~\cref{orsolimite}}
%\begin{proof}[]
%	Recall by Proposition~\ref{proporso} that
%	\begin{equation}
%	U_q=\sum_{k=1}^{N-1}\left(\frac{q}{q+Nw}\right)\left(\frac{Nw}{q+Nw} \right)^{k-1}\prod_{i=2}^k\left(1-\frac{i}{N}\right)
%\end{equation}
Let
\begin{equation}
\frac{\xi_q}{N}:=\frac{q}{Nw+q}
\end{equation}
and notice that if $q=x\sqrt{N}$, with $x,w=\Theta(1)$, then
\begin{equation}
q=\frac{Nw\xi_q}{N-\xi_q}\Longrightarrow q\sim w\xi_q.
\end{equation}
Call
\begin{equation}
f(k,N):=\prod_{i=2}^k\left(1-\frac{i}{N}\right),
\end{equation}
in order to rewrite
\begin{align}
\begin{split}
U^{(N)}_q=&\sum_{k=0}^{N-2}\left(\frac{\xi_q}{N}\right)\left(1-\frac{\xi_q}{N} \right)^{k}\prod_{i=2}^{k+1}\left(1-\frac{i}{N}\right)\\
=&\sum_{k=0}^{N-2}\left(\frac{\xi_q}{N}\right)\left(1-\frac{\xi_q}{N} \right)^{k}f(k+1,N)
\end{split}
\end{align}
and notice that the first term in the latter sum is the probability that the geometric random variable $T_q \overset{d}{\sim} Geom\left(\frac{\xi_q}{N}\right)$ assumes value $k$. Moreover it trivially holds that
\begin{equation}\label{fknmin1}
f(k+1,N)\le1,\:\:\forall k\in\N,\qquad f(k+1,N)=0,\:\:\forall k\ge N-1.
\end{equation}
Hence,
\begin{equation}\label{uqmeant}
U^{(N)}_q=\E[f(T_q+1,N)].
\end{equation}

\noindent Let us approximate $\ln f(k+1,N)$ at the first order as follows
\begin{align}\label{eolo}
\begin{split}
\ln f(k+1,N)=&\sum_{i=2}^{k+1}\ln\left(1-\frac{i}{N}\right)=-\sum_{i=2}^{k+1}\frac{i}{N}+O\left(\frac{i^2}{N^2}\right)\\
=&-\frac{1}{N}\frac{(k+1)(k+2)-2}{2}+kO\left(\frac{k^2}{N^2}\right)=-\frac{1}{N}\frac{k^2+3k}{2}+O\left(\frac{k^3}{N^2}\right)\\
=&-\frac{k^2}{2N}+O\left(\frac{k}{N}+\frac{k^3}{N^2}\right)=:-\frac{k^2}{2N}+c_N(k).
\end{split}
\end{align}

Next, set $Y\,\overset{d}{\sim}\, exp(x)$ and $Z\,\overset{d}{\sim} \,\mathcal N(0,1)$, notice that $\E[e^{\frac{Y^2}{2}}]=\sqrt{2\pi}xe^{\frac{x^2}{2}}\P(Z>x)$ and that 
\begin{equation}
\lim_{N\to\infty}|\E[e^{-\frac{T_q^2}{2N}}]-\E[e^{\frac{Y^2}{2}}]|=0,
\end{equation}
since $T_q/\sqrt{N}$ converges in distribution to $Y$ as $N$ diverges.
In view of the latter together with~\cref{uqmeant}, we can estimate

\begin{align*}
\left|U^{(N)}_q-\sqrt{2\pi}xe^{\frac{x^2}{2}}\P(Z>x)
\right|&\leq 
\left| \E[f(T_q+1,N)]-\E[e^{-\frac{T_q^2}{2N}}]\right| + o(1) 
\\
\le &
\left| \E[f(T_q+1,N)]-\sum_{k=0}^{\lfloor N^\delta\rfloor}\P(T_q=k)e^{-\frac{k^2}{2N}}e^{c_N(k)}\right|
\\
&+
\left|\sum_{k=0}^{\lfloor N^\delta\rfloor}\P(T_q=k)e^{-\frac{k^2}{2N}}e^{c_N(k)}-\E[e^{-\frac{T_q^2}{2N}}]\right| 
+ o(1) 
\\\le&
\sum_{k=\lfloor N^\delta\rfloor+1}^\infty\P(T_q=k) + 	
\left|\sum_{k=0}^{\lfloor N^\delta\rfloor}\P(T_q=k)e^{-\frac{k^2}{2N}}e^{c_N(k)}-\sum_{k=0}^{\lfloor N^\delta\rfloor}\P(T_q=k)e^{-\frac{k^2}{2N}}\right|
+ o(1)\\
=& o(1),
\end{align*}
where the last inequality holds true by choosing any $\delta\in\left(\frac{1}{2},\frac{2}{3}\right)$ which in particular guarantees that $c_N(k)=o(1)$.
\qed

%%%%%%%%%%%%%%%%%%%%%%%%%%%%%%%%%%%%%%%%%%

\section{Proofs for mean-field-communities}\label{proof2com}
\subsection{Proof of~\cref{2par2comssintpot}}
We use here the same line of argument used in the proof of~\cref{proporso}.
We will consider the process having state space $V=V_1\sqcup V_2$, where
$$V_1=\left\{1,\dots, N_1 \right\},\qquad V_2=\left\{N_1+1,\dots,N_1+N_2 \right\},$$
and generator
\begin{equation}\label{lap2com}
\cL(x,y)=\begin{cases}
w_1&\text{if } x\not=y  \text{ and } x,y \text{ in the same community}\\
w_2&\text{if } x\not=y  \text{ and } x,y \text{ not in the same community}\\
-(N_1-1)w_1-N_2 w_2&\text{if } x=y \text{ and } x\in V_1\\
-(N_2-1)w_1-N_1 w_2&\text{if } x=y  \text{ and } x\in V_2.
\end{cases}
\end{equation}
We will specialize later on the case $N_1=N_2=N$.\\
We now consider a killed LERW $\Gamma$, and we denote by $\Gamma_i$ the set of points of the $i$-th community belonging to $\Gamma$, i.e.,
\begin{equation}
\Gamma_i=\Gamma\cap V_i,\qquad i=1,2.
\end{equation}
We can write
\begin{equation}
\P_x^{LE_q}(|\Gamma_1|=k_1, |\Gamma_2|=k_2)=\sum_{\gamma: |\gamma_1|=k_1,|\gamma_2|=k_2}	\P_x^{LE_q}(\gamma)\label{marshall},
\end{equation}
and we assume, without loss of generality, that $x\in V_1$; then, by conditioning, we get for $ y\neq x$ with $y\in V_j$, $j=1,2$
\begin{equation}
U_q^{(N)}(x,y)=\sum_{k_1=1}^{N_1-\ind_{j=1}}\sum_{k_2=0}^{N_2-\ind_{j=2}}\P^{LE_q}_x(|\Gamma_1|=k_1, |\Gamma_2|=k_2)\cdot \P_y\left(T_q<T_{\Gamma}\big|\Gamma\right),\label{ma}
\end{equation}
$T_{\Gamma}$ being the hitting time of $\Gamma$.

\subsection*{The LERW starting from $x$}
A result due to Marchal~\cite{M00} provides the following explicit expression for the probability of a loop erased trajectory:
\begin{equation}\label{LERWlaw}
\P_x^{LE_q}(\Gamma=\gamma)=\prod_{i=1}^{|\gamma|}w(x_{i-1},x_i)\frac{\det_{V\setminus\gamma}{(qI+\mathcal{L})}}{\det{(qI+\mathcal{L})}}.
\end{equation}
By looking closely at the latter formula we distinguish two parts: a product over the weights of the edges of the path and an algebraic part containing the ratio of two determinants which encodes the ``loop-erased'' feature of the process. In particular we notice that the former contains all the details about the trajectory, while the latter only depends on the number of points visited in each community. Let $j_1$ (respectively, $j_2$)  be the number of jumps from the first community to the second (from the second to the first, respectively) along the LE-path. We have
\begin{equation}\label{2comm}
\begin{split}
\P_x^{LE_q}(|\Gamma_1|=k_1, &|\Gamma_2|=k_2|x\in V_1,\:y\in V_2)=\\
=&\sum_{\gamma: |\gamma_1|=k_1,|\gamma_2|=k_2}	\P_x^{LE_q}(\Gamma=\gamma)\\
=&\binom{N_1-1}{k_1-1}\binom{N_2-1}{k_2}\cdot(k_1-1)!(k_2)!\cdot\sum_{j_{1}=0}^{\min\{k_1,k_2\}}\sum_{j_{2}=j_{1}-1}^{j_{1}}\binom{k_1-1}{j_1-\ind_{j_{1}\neq j_{2}}}\binom{k_2-1}{j_{2}-\ind_{j_{1}=j_{2}}}\cdot\\
&\cdot w_1^{k_1+k_2-(j_1+j_2)-1}w_2^{j_1+j_2}q\frac{\det_{V\setminus\{1,2,\dots,k_1,N_1+1,N_1+2,\dots,N_1+k_2\}}(qI+\mathcal{L})}{\det(qI+\mathcal{L})}
\end{split}
\end{equation}
where 
\begin{itemize}
	\item The first binomial coefficients stays for the $k_1-1$ possible choices for the points in $G_1$ (one of those must be $x$) over the possible $N_1-1$ points of the first community (except $x$). In the second community we can choose any $k_2$ vertices over the possible $N_2-1$ vertices of the second community (except $y$).
	\item The factorials stay for the possible ordering of the nodes covered in each community. Notice that the path on the first community must start by $x$.
	\item We sum over all the possible jumps from the first community to the second, $j_1$, and from the second to the first, $j_2$ (notice that if $j_2$ must be equal or one smaller than $j_1$).
	\item For any choice over the product of the previous three terms we have a path that has probability as given by the Marchal formula.
\end{itemize}
In the case in which we condition on having both $x$ and $y$ in the same (first, say) community we have
\begin{equation}\label{2comm2}
\begin{split}
\P_x^{LE_q}(|\Gamma_1|=k_1, &|\Gamma_2|=k_2|x\in V_1,\:y\in V_1)=\\
=&\sum_{\gamma: |\gamma_1|=k_1,|\gamma_2|=k_2}	\P^{LE_q}_x(\Gamma=\gamma)\\
=&\binom{N_1-2}{k_1-1}\binom{N_2}{k_2}\cdot(k_1-1)!(k_2)!\cdot\sum_{j_{1}=0}^{\min\{k_1,k_2\}}\sum_{j_{2}=j_{1}-1}^{j_{1}}\binom{k_1-1}{j_1-\ind_{j_{1}\neq j_{2}}}\binom{k_2-1}{j_{2}-\ind_{j_{1}=j_{2}}}\cdot\\
&\cdot w_1^{k_1+k_2-(j_1+j_2)-1}w_2^{j_1+j_2}q\frac{\det_{V\setminus\{1,2,\dots,k_1,N_1+1,N_1+2,\dots,N_1+k_2\}}(qI+\mathcal{L})}{\det(qI+\mathcal{L})}.
\end{split}
\end{equation}
Namely, only the first combinatorial term changes.

\subsection*{The ratio of determinants}
In our \emph{mean-field} setup, the terms in~\cref{2comm} and~\cref{2comm2} coming from ~\cref{LERWlaw} can be explicitly computed.
We consider here the two communities case, i.e. $V=V_1\sqcup V_2$, where the communities possibly have different sizes, $|V_1|=N_1$ and $|V_2|=N_2$. Now, consider the matrix obtained by erasing $k_1$ ($k_2$) rows and corresponding columns in the first community (the second one, respectively) in $-\cL$. We are left with a square matrix made of two square blocks on the diagonal of size $N_1-k_1=:K_1$ (respectively $N_2-k_2=:K_2$). We will denote this matrix by
\begin{equation}
-M=
\begin{pmatrix}
d_1         & \cdots           &w_1         & w_2      & \cdots      & w_2 \\
\vdots             & \ddots     & \vdots                 & \vdots      & \ddots      & \vdots \\
w_1   & \cdots & d_1       &  w_2                & \cdots                &w_2                      \\
w_2            & \cdots    &w_2        &d_2           &\cdots        &w_1                 \\
\vdots & \ddots     & \vdots        &\vdots            &\ddots       & \vdots               \\
w_2           &\cdots      & w_2        &w_1              &w_1                & d_2\\
\end{pmatrix}=
\begin{pmatrix}
A_1    &B       \\
B^{Tr}   &A_2
\end{pmatrix},
\end{equation}
where the elements on the diagonal are given by
\begin{equation}
d_1=-((N_1-1)w_1 + N_2w_2),\qquad d_2=-((N_2-1)w_1 +N_1w_2).
\end{equation}
We want to find $K_1+K_2$ solutions of the problem
\begin{equation}
-Mv=\lambda v \label{eigen1}
\end{equation}
First we consider eigenvectors of the form
$v=(x_1,x_1,...,x_1,x_2,...,x_2)^{Tr}$, where the upper component has length $K_1$ and the lower one has length $K_2$. If we write explicitly~\cref{eigen1} we get the following linear system:
\begin{equation}\label{smalllumppro}
-\begin{pmatrix}
d_1 +(K_1-1)w_1 & K_2w_2 \\
K_1w_2 & d_2 + (K_2-1)w_1
\end{pmatrix} \begin{pmatrix}x_1\\x_2\end{pmatrix}=\lambda\begin{pmatrix}x_1\\x_2\end{pmatrix},
\end{equation}
from which we get two eigenvalues, which we will refer to as $\lambda_1$ and $\lambda_2$.
\\
Then we consider $v=(x_1,x_2,..., x_{K_1},0,...,0)^{Tr}$; with this choice we are left with the system
\begin{equation}
-\begin{pmatrix}
d_1       &\cdots      &w_1      \\
\vdots  &\ddots        &\vdots \\
w_1            &\cdots     &d_1
\end{pmatrix}
\begin{pmatrix}
x_1 \\
\vdots \\
x_{K_1}
\end{pmatrix}=\lambda\begin{pmatrix}
x_1 \\
\vdots \\
x_{K_1}
\end{pmatrix},
\qquad w_2(x_1+\cdots+x_{K_1})=0\end{equation}
and we have to find $K_1-1$ eigenvalues that are associated with eigenvector orthogonal to constants.
By direct computation, $A_1$ has eigenvalue $\lambda_1':=(N_1w_1+N_2w_2)$ with multiplicity $K_1-1$. With the opposite choice, namely $v=(0,...,0, x_1,..., x_{K_2})^{Tr}$, we get 
\begin{equation}
-\begin{pmatrix}
d_2       &\cdots      &w_1       \\
\vdots  &\ddots        &\vdots \\
w_1            &\cdots     &d_2
\end{pmatrix}
\begin{pmatrix}
x_1 \\
\vdots \\
x_{K_2}
\end{pmatrix}=\lambda\begin{pmatrix}
x_1 \\
\vdots \\
x_{K_2}
\end{pmatrix},
\qquad\qquad w_2(x_1+\cdots+x_{K_2})=0.
\end{equation}
Namely, there is an eigenvalue $\lambda_2':=(N_2w_1+N_1w_2)$ with multiplicity $K_2-1$.
So the spectrum of $M$ is
\begin{equation}
\text{spec}(M)=(\lambda_1, \lambda_2, \lambda_1',\lambda_2' )
\end{equation}
with multiplicity denoted by $\mu_{M}(\cdot)$:
\begin{equation}
\mu_{M}(\lambda_1)=1,\quad \mu_{M}(\lambda_2)=1,\quad \mu_{M}(\lambda_1')=K_1-1,\quad \mu_{M}(\lambda_2')=K_2-1.
\end{equation}
Therefore, we can see that the ratio of determinants in~\cref{2comm} and~\cref{2comm2} can be written explicitly. Indeed, at the denominator we have
\begin{equation}
\det{(qI+\mathcal{L})}=q(q+Nw_2)(q+N_1w_1+N_2w_2)^{N_1-1}(q+N_2w_1+N_1w_2)^{N_2-1},
\end{equation}
while at the numerator we are left with
\begin{equation}
\det_{V\setminus\{1,2,\dots,k_1,N_1+1,N_1+2,\dots,N_1+k_2\}}(qI+\mathcal{L})=(q+\lambda_1)(q+\lambda_2)(q+\lambda_1')^{N_1-k_1-1}(q+\lambda_2')^{N_2-k_2-1}
\end{equation}
where
\begin{equation}
\lambda_1':=N_1w_1+N_2w_2, \qquad \lambda_2':=N_1w_2+N_2w_1,
\end{equation}
while $\lambda_{1}$ and $\lambda_{2}$ are the two solutions of the system in~\cref{smalllumppro}.
In particular, if we specialize in the case $N_1=N_2=N$ we can conclude that the ratio of determinants is given by
\begin{equation}\label{def:theta}
\theta(k_1,k_2):=\frac{(q-\lambda_1(k_1,k_2))(q-\lambda_2(k_1,k_2))}{q(q+2Nw_2)(q+a)^{k_1+k_2}}
\end{equation}
where we defined
\begin{equation}
a:=N(w_1+w_2),
\end{equation}
and
\begin{equation*}
\lambda_{i}(k_1,k_2):=-\frac{1}{2}\left[w_1(k_1+k_2)+2Nw_2+(-1)^i\sqrt{w_1^2(k_1-k_2)^2+4\left(N-k_1\right)\left(N-k_1\right)w_2^2}\right],\quad i=1,2.
\end{equation*}
\subsection*{The path starting from $y$}
Now we have to consider the second path starting from $y$ which decides the root at which $y$ will be connected in the forest generated by the algorithm.
The latter corresponds to the second factor in~\cref{ma}. Notice that it is sufficient to consider such path in the simpler fashion, i.e. without erasing the loops, since we are only concerned with the absorption of the walker: either in $\gamma$ or killed at rate $q$. Moreover, we can exploit again the symmetry of the model to reduce it to a Markov chain $\bar X$ with state space $\{\bar 1,\bar 2,\bar 3,\bar 4\}$ corresponding to the sets $\left\{V_1\setminus \gamma_1, V_2\setminus \gamma_2, \gamma_1\sqcup \gamma_2,\Delta \right\}$, where $\Delta$ is again the absorbing state, i.e., the ``state-independent'' exponential killing. We will assume that 
$$|\gamma_i|=k_i,\qquad |V_i|=N_i,\qquad i=1,2.$$
Hence, the transition matrix we are interested in is given by
\begin{equation}\label{smallprocmorte}
\bar P:=\left(\begin{matrix}
Q&R\\0&I
\end{matrix} \right),
\end{equation}
where
\begin{equation}
Q:=D^{-1}\left(\begin{matrix}
\left(N_1-k_1-1  \right)w_1&\left(N_2-k_2 -1 \right)w_2\\
\left(N_1-k_1  \right)w_2&\left(N_2-k_2  \right)w_1
\end{matrix}
\right),
\end{equation}
\begin{equation}
D^{-1}:=\left(\begin{matrix}(q+a_1-w_1)^{-1}&0\\0&(q+a_2-w_1)^{-1}\end{matrix}\right),\qquad R:=D^{-1}\left(\begin{matrix}
k_1w_1+k_2w_2&q\\
k_1w_2+k_2w_1&q
\end{matrix}
\right).
\end{equation}
with
\begin{equation}
a_1:=N_1w_1+N_2w_2,\qquad a_2:=N_1w_2+N_2w_1.
\end{equation}
The states represent:
\begin{itemize}
	\item[($\bar 1$)] nodes of the $1^{st}$ community that have \emph{not} been covered by the LE-path started at $x$.
	\item[($\bar 2$)]  nodes of the $2^{nd}$ community that have \emph{not} been covered by the LE-path started at $x$.
	\item[($\bar 3$)]  nodes of \emph{both} communities that have been covered by the LE-path started at $x$.
	\item[($\bar 4$)]  the absorbing state $\Delta$.
\end{itemize}

Called $T_{abs}$ the hitting time of the absorbing set $\left\{\bar 3 ,\bar 4\right\}$, we want to compute the probability that the process $\bar X$ is absorbed in the  state, $\bar 4$ and not in $\bar 3$. In terms of our original process, this means that the process is killed before the hitting of the LE-path starting at $x$. By direct computation
\begin{align}\label{pmorte}
\begin{split}
\P_{\bar 2}(\bar X(T_{abs})=\bar 4)=&\sum_{k=0}^\infty\bar P^k(\bar 2,\bar 1)\frac{q}{q+a_1-w_1}+\sum_{k=0}^\infty\bar P^k(\bar 2,\bar 2)\frac{q}{q+a_2-w_1}\\
=&\left(\sum_{k=0}^\infty Q^k\right)D^{-1}\binom{q}{q}(2)\\
=&(I-Q)^{-1}D^{-1}\binom{q}{q}(2)\\
=:&P^{\dagger}(2)
\end{split}
\end{align}
notice that the first component of the vector $P^\dagger\in\R^2$  corresponds to the \emph{intra-community} case $\left\{x,y\right\}\in V_i$ for some $i$, i.e., $U^{(N)}_q(in)$, while the second one to the \emph{inter-community} case, namely $U^{(N)}_q(out)$.
\newline\newline
If we now use the assumption that $N_1=N_2=N$, the steps above allow us to write the following formulas
\begin{align}\label{vinter2ss}
\begin{split}
U^{(N)}_q(out)=&\sum_{k_1=1}^{N}\sum_{k_2=0}^{N-1}\binom{N-1}{k_1-1}\binom{N-1}{k_2}(k_1-1)!(k_2)!\theta(k_1,k_2)P^\dagger(2)\cdot\\
&\cdot\sum_{j_1=0}^{min(k_1,k_2)}\sum_{j_2=j_1-1}^{j_1}\binom{k_1-1}{f_1(j_1,j_2)}\binom{k_2-1}{f_2(j_1,j_2)}w_1^{k_1+k_2-1-j_1-j_2}w_2^{j_1+j_2}q
\end{split}
\end{align}
\begin{align}\label{vintra2ss}
\begin{split}
U^{(N)}_q(in)=&\sum_{k_1=1}^{N-1}\sum_{k_2=0}^{N}\binom{N-2}{k_1-1}\binom{N}{k_2}(k_1-1)!(k_2)!\theta(k_1,k_2)P^\dagger(1)\cdot\\
&\cdot\sum_{j_1=0}^{min(k_1,k_2)}\sum_{j_2=j_1-1}^{j_1}\binom{k_1-1}{f_1(j_1,j_2)}\binom{k_2-1}{f_2(j_1,j_2)}w_1^{k_1+k_2-1-j_1-j_2}w_2^{j_1+j_2}q
\end{split}
\end{align}
where
\begin{equation}
f_1(j_1,j_2):=j_1-\ind_{\left\{j_1\not=j_2 \right\}},\:\:\:f_2(j_1,j_2):=j_2-\ind_{\left\{j_1=j_2 \right\}},
\end{equation}
$\theta(k_1,k_2)$ as in~\cref{def:theta} and
\begin{equation}
P^\dagger=\frac{1}{q+a-w_1}(I-Q)^{-1}\binom{q}{q}.
\end{equation}
By direct computation we see that
\begin{equation}
P^\dagger=\frac{q}{c}\binom{q+k_2(w_1-w_2)+2w_2N}{q+k_1(w_1-w_2)+2w_2N}.
\end{equation}
where
\begin{equation}
c:=(q+k_1w_1)(q+k_2w_1)+Nw_2(2q+(k_1+k_2)w_1)+w_2^2[N(k_1+k_2)-k_1k_2].
\end{equation}
\subsection*{Local time interpretation}
Now consider the part of the formula concerning the jumps among the two communities of the killed-LE-path starting at $x$, i.e.
\begin{equation}\label{431}
\sum_{j_1=0}^{min(k_1,k_2)}\sum_{j_2=j_1-1}^{j_1}\binom{k_1-1}{f_1(j_1,j_2)}\binom{k_2-1}{f_2(j_1,j_2)}w_1^{k_1+k_2-1-j_1-j_2}w_2^{j_1+j_2}.
\end{equation}
The latter can be thought of as a function of a Markov Chain $(\tilde X_n)_{n\in\N}$ on the state space $\left\{\underline{1},\underline{2}\right\}$, with transition matrix
\begin{equation}\label{smallproc}
\tilde P=\left(\begin{matrix}p&1-p\\1-p&p \end{matrix}\right),\qquad p=\frac{w_1}{w_1+w_2}
\end{equation}
where the $\underline{i}$-th state stays for the $i$-th community. Indeed,  we can rewrite~\cref{431} as
\begin{equation*}
(w_1+w_2)^{k_1+k_2-1}\sum_{j_1=0}^{min(k_1,k_2)}\sum_{j_2=j_1-1}^{j_1}\binom{k_1-1}{f_1(j_1,j_2)}\binom{k_2-1}{f_2(j_1,j_2)}\left(\frac{w_1}{w_1+w_2}\right)^{k_1+k_2-1-j_1-j_2}\left(\frac{w_2}{w_1+w_2}\right)^{j_1+j_2}=
\end{equation*}
\begin{equation}\label{local}
=(w_1+w_2)^{k_1+k_2-1}\tilde \P_1(\ell(k_1+k_2)=k_1)
\end{equation}
with $\ell$ being the local time as in the statement of~\cref{2par2comssintpot}.

\subsection*{Geometric smoothing}
From the previous steps we get the following expression
\begin{align}
\begin{split}
U^{(N)}_q(out)=&\sum_{k_1=1}^{N}\sum_{k_2=0}^{N-1}\left(N-1\right)_{k_1-1}\left(N-1\right)_{k_2}\frac{(q-\lambda_1(k_1,k_2))(q-\lambda_2(k_1,k_2))}{q(q+2Nw_2)(q+a)^{k_1+k_2}}\cdot\\
&\cdot q(w_1+w_2)^{k_1+k_2-1}\tilde \P_1(\ell(k_1+k_2)=k_1)P^\dagger(2).
\end{split}
\end{align}
Next, we would like to make appear a geometric term as in the complete and uniform case of~\cref{proporso}. Notice that multiplying and dividing by $N^{k_1+k_2-1}$ one obtains
\begin{align}
\begin{split}
U^{(N)}_q(out)=&\sum_{k_1=1}^{N}\sum_{k_2=0}^{N-1}N^{-(k_1+k_2-1)}\left(N-1\right)_{k_1-1}\left(N-1\right)_{k_2}\frac{(q-\lambda_1(k_1,k_2))(q-\lambda_2(k_1,k_2))}{q(q+2Nw_2)}\cdot\\
&\cdot \frac{q}{q+a}\left(\frac{a}{q+a}\right)^{k_1+k_2-1}\tilde \P_1(\ell(k_1+k_2)=k_1)P^\dagger(2)
\end{split}
\end{align}
we can then define
\begin{equation}\label{xi}
\xi_{q,N}:=\frac{q}{q+a}=\frac{q}{q+N(w_1+w_2)}
\end{equation}
in order to obtain
\begin{align}\label{438}
\begin{split}
U^{(N)}_q(out)=&\sum_{k_1=1}^{N}\sum_{k_2=0}^{N-1}N^{-(k_1+k_2-1)}\left(N-1\right)_{k_1-1}\left(N-1\right)_{k_2}\frac{(q-\lambda_1(k_1,k_2))(q-\lambda_2(k_1,k_2))}{q(q+2Nw_2)}\cdot\\
&\cdot \P(T_q=k_1+k_2)\tilde \P_1(\ell(k_1+k_2)=k_1)P^\dagger(2)
\end{split}
\end{align}
and
\begin{align}\label{439}
\begin{split}
U^{(N)}_q(in)=&\sum_{k_1=1}^{N-1}\sum_{k_2=0}^{N}N^{-(k_1+k_2-1)}\left(N-2\right)_{k_1-1}\left(N\right)_{k_2}\frac{(q-\lambda_1(k_1,k_2))(q-\lambda_2(k_1,k_2))}{q(q+2Nw_2)}\cdot\\
&\cdot \P(T_q=k_1+k_2)\tilde \P_{\underline{1}}(\ell(k_1+k_2)=k_1)P^\dagger(1)
\end{split}
\end{align}
where $T_q$ is an independent random variable with law $Geom\left( \xi_{q,N} \right)$.
\subsection*{Conclusions}
One can ideally divide the formulas in~\cref{438,439}  in five terms, namely
\begin{enumerate}
	\item The entropic term
	\begin{equation}
	N^{-(k_1+k_2-1)}\left(N-2\right)_{k_1-1}\left(N\right)_{k_2}\qquad\text{ or }\qquad N^{-(k_1+k_2-1)}\left(N-1\right)_{k_1-1}\left(N-1\right)_{k_2}
	\end{equation}
	was already present in the complete and uniform case~\cref{orsoformula}. Indeed 
	\begin{equation}
	\prod_{h=2}^k\left(1-\frac{h}{N}\right)=N^{-(k-1)}(N-2)_{k-2}.
	\end{equation}
	\item The term related to the spectrum of the size 2 matrix presented in~\cref{smalllumppro}, i.e.
	\begin{equation}
	\frac{(q-\lambda_1(k_1,k_2))(q-\lambda_2(k_1,k_2))}{q(q+2Nw_2)}
	\end{equation}
	which is the same in both \emph{in} e \emph{out} community cases. It can be rewritten as the ratio between two parabolas in $q$, i.e.,
	\begin{equation}
	\frac{q^2+[(k_1+k_2)w_1+2Nw_2]q+(w_1+w_2)[(k_1+k_2)Nw_2+k_1k_2(w_1-w_2)]}{q^2+2Nw_2q}
	\end{equation}
	\item The term related to the geometric random variable of parameter $\xi_{q,N}$, which was present also in the case of the uniform graph,~\cref{orsoformula}.
	\item The term related to the local times of the 2-states Markov chain $\tilde P$, in~\cref{smallproc}.
	\item The term related to the absorption probability, i.e., to the quantity $P^\dagger$, see~\cref{pmorte}, as a function of the process $\bar{P}$ presented in~\cref{smallprocmorte}.
\end{enumerate}
It is worth noticing that the $P^\dagger$ above is slightly different from the $P^\dagger_\star$ in the statement of~\cref{2par2comssintpot} which contains the extra factor $\eta_\star$. 
At this point by setting
\begin{equation}
g'_{out}(k_1,k_2):=N^{-(k_1+k_2-1)}\left(N-1 \right)_{k_1-1}\left(N-1 \right)_{k_2}\frac{(q-\lambda_1(k_1,k_2))(q-\lambda_2(k_1,k_2))}{q(q+2Nw_2)}P^{\dagger}(2),
\end{equation}
\begin{equation}
g'_{in}(k_1,k_2):=N^{-(k_1+k_2-1)}\left(N-2 \right)_{k_1-1}\left(N \right)_{k_2}\frac{(q-\lambda_1(k_1,k_2))(q-\lambda_2(k_1,k_2))}{q(q+2Nw_2)}P^{\dagger}(1),
\end{equation}
we can write
\begin{align}\label{446} 
\begin{split}
U^{(N)}_q(out)=&\sum_{k_1=1}^{N}\sum_{k_2=0}^{N-1}g'_{out}(k_1,k_2)\P(T_q=k_1+k_2)\tilde \P_{\underline{1}}(\ell(k_1+k_2)=k_1)\\
=&\sum_{n=1}^{2N}\sum_{k_1+k_2=n}g'_{out}(k_1,k_2)\P(T_q=n)\tilde \P_{\underline{1}}(\ell(n)=k_1),
\end{split}
\end{align}
and
\begin{align}\label{447}
\begin{split}
U^{(N)}_q(in)=&\sum_{k_1=1}^{N-1}\sum_{k_2=0}^{N}g'_{in}(k_1,k_2)\P(T_q=k_1+k_2)\tilde{\P}_{\underline{1}}(\ell(k_1+k_2)=k_1)\\
=&\sum_{n=1}^{2N}\sum_{k_1+k_2=n}g'_{in}(k_1,k_2)\P(T_q=n)\tilde \P_{\underline{1}}(\ell(n)=k_1),\\
\end{split}
\end{align}
which is equivalent to the statement in~\cref{2par2comssintpot}.
\qed

%%%%%%%%%%%%%%%%%%%%%%%%%%%%%%%%%%%%%%

\subsection{Proof of ~\cref{phasetrans}}\label{detect}

\noindent{
	\bf{ Proofs of \bf{(a)} and \bf{(b)}:  $1-\beta<\alpha<(=)\frac{1}{2}$ (detectability)  } } 
%\begin{proof}

As expressed in the following lemma in this regime the RW is confined to its starting community for the entire life-time.

\begin{lemma}[RW is confined to its community up to dying]\label{lemmageom}
	Let $1>\alpha>1-\beta$ and for $x\in [2N]$, consider the event 
	
	$$E_x:=\{T_q>T_x^{out} \}$$
	
	where $T_x^{out}$ is the first time in which the RW moves out of the community in which $x$ lies. 
	
	Then, as $N\to\infty$, 
	$$\P_x(E_x)=o(1).$$
\end{lemma}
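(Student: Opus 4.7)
The plan is to reduce the event $E_x$ to a competing exponential clocks computation, exploiting the symmetry within each community so that the intra-community jumps can be ignored.

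Fix $x$ in community $V_1$, say. While the walker remains in $V_1$, at each state it has three types of independent Poisson clocks: a killing clock of rate $q$, a total intra-community jump rate $(N-1)w_1$, and a total inter-community jump rate $Nw_2$. By the mean-field structure of $\mathcal{K}_{2N}(w_1,w_2)$, after any intra-community jump the walker is still at some vertex of $V_1$ with \emph{exactly the same} outgoing rate profile (and $T_q$ is independent of the walk by assumption). Hence, using the memoryless property and induction over the number of intra-community jumps, the event $E_x = \{T_q > T_x^{\text{out}}\}$ is equivalent to the event that in the race between the killing clock and the inter-community exit clock, the latter wins first:
\begin{equation*}
\P_x(E_x) \;=\; \frac{Nw_2}{q + Nw_2}.
\end{equation*}

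Substituting $w_1 = 1$, $w_2 = N^{-\beta}$, $q = N^\alpha$ gives
\begin{equation*}
\P_x(E_x) \;=\; \frac{N^{1-\beta}}{N^\alpha + N^{1-\beta}} \;\le\; N^{1-\beta-\alpha},
\end{equation*}
which tends to $0$ precisely under the assumption $\alpha > 1-\beta$.

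There is no genuine obstacle here; the only care needed is to justify that the intra-community moves can be discarded in the competition between killing and exiting, which is immediate from the fact that the jump distribution to $V_2$ versus to the killing state is identical at every vertex of $V_1$ (so the geometric/exponential thinning argument applies cleanly). The role of the upper bound $\alpha < 1$ in the lemma statement is cosmetic for this estimate—it is the relevant range for the subsequent applications to parts \textbf{(a)} and \textbf{(b)} of \cref{phasetrans}, where this lemma will be combined with the complete-graph analysis of \cref{proporso} applied separately within each community.
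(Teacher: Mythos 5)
Your proposal is correct and is essentially the paper's argument: the paper performs the same reduction by discretizing into i.i.d.\ steps with outcomes $\{Out, In, \Delta\}$ and summing a geometric series over the number of intra-community moves, arriving at the same quantity $\frac{Nw_2}{q+Nw_2}=\frac{N^{1-\beta}}{N^{\alpha}+N^{1-\beta}}\sim N^{1-\beta-\alpha}$ that you obtain via competing exponential clocks. Your observation that only $\alpha>1-\beta$ (and not $\alpha<1$) is needed for the bound is also accurate.
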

\begin{proof}
	Let $Z$ be a r.v. that can assume values in the set $\{Out, In, \Delta\}$ with probabilities:
	
	$$\P(Z=Out)= \frac{N^{1-\beta}}{N^\alpha+N+ N^{1-\beta}}=:a_N,$$
	$$\P(Z=In)= \frac{N}{N^\alpha+N+ N^{1-\beta}}=:b_N \quad \text{ and } \quad \P(Z=\Delta)=1- (a_N+b_N).$$

	Let $(Z_n)_{n\in\N}$ be a sequence of i.i.d. r.v.s with the same law of $Z$ and notice that 
	$$ \P(T_q<T_x^{out})=\P\left(\min\{n\ge0\:|\:Z_n=\Delta \}
	<\min\{n\ge0\:|\:Z_n=Out \}\right).$$
	Therefore 
	\begin{align*}
	\P_x(E_x)=\P_x(T_q>T_x^{out} )=&\sum_{n=1}^\infty\P_x(T_x^{out}=n,T_q>n)\\
	=&\sum_{n=1}^\infty b_N^{n-1}a_N\\
	=&\frac{a_Nb_N}{1-b_N}\sim N^{1-\beta-\alpha},
	\end{align*}
	from which the claim.
\end{proof}

In view of the decomposition in~\cref{LEdec} and the above lemma, we can write for any $x\neq y$

\begin{align}\notag
U^{(N)}_q(x,y)=&\sum_{\gamma}\P^{LE}_x(\gamma)\[\P_y(T_\gamma>T_q|E_x^c)\P_y(E_x^c)+\P_y(T_\gamma>T_q|E_x)\P_y(E_x)\]\\ \notag
=&o(1)+(1-o(1))\sum_{\gamma}\P_x^{LE}(\gamma)\P_y(T_\gamma>T_q|E_x^c)\\
\label{LEdetect}\sim&\sum_{\gamma}\P_x^{LE}(\gamma)\P_y(T_\gamma>T_q|E_x^c).
\end{align}	
%We stress that all the hitting times in this section refer to the discretized RW from which e.g. the hitting time notation $T_\gamma$ in place of $\tau_\gamma$. 

Let us first consider  $U^{(N)}_q(out)$. In this case, by ~\cref{lemmageom}, 
for any $\alpha\leq 1/2$ and uniformly in $\gamma$, we have that
\begin{align*}
\P_y(T_\gamma<T_q|E_x^c)\le&\P_y(T_y^{out}<T_q|E_x^c)\\
=&\P_y(E_y)\\
=&o(1).
\end{align*}
As a consequence $\P_y(T_\gamma>T_q|E_x^c)\geq 1-o(1)$, and
by plugging this estimate in~\cref{LEdetect}, we get $U^{(N)}_q(out)\to 1$.

Concerning $U^{(N)}_q(in)$, one has to notice that, for every LERW $\gamma$ starting from $x$ and ending at the absorbing state, we can consider the event
$$E_{\gamma,y}=\{T_y^{out}<\min(T_\gamma,T_q) \}.$$

Once more, uniformly in $\gamma$, we get by~\cref{lemmageom} that
\begin{align*}
\P_y(E_{\gamma,y})\leq \P_y(E_y)=o(1)
\end{align*}
Thus, for $x,y \in [N]$, by~\cref{LEdetect}, we can estimate 
\begin{align*}
U^{(N)}_q(x,y)=&o(1)+(1-o(1))\sum_{\gamma}\P_x^{LE}(\gamma|E_x^c)\P_y(T_\gamma>T_q|E_x^c,E_{\gamma,y}^c)
\end{align*}
Notice that, under such conditioning, the sum can be read as the probability that two vertices in a complete graph with $N$ vertices end up in two different trees. Therefore, this reduces to~\cref{orsolimite}, which in turns gives $U^{(N)}_q(in)\to 0$ for $\alpha<1/2$ and $U^{(N)}_q(in)\to \varepsilon_0(\alpha)$ else.
\qed

\noindent{\bf{ Proof of {\bf(f)} :  $\alpha>\frac{1}{2}$ (high killing region)}} 
%\begin{proof}\textbf{\textcolor{violet}{Violet Case}} $\alpha>\frac{1}{2}$. 

%

We will only show that $U^{(N)}_q(in)\to 1$, this will suffice since e.g. by direct computation one can check that $U^{(N)}_q(in)\geq U^{(N)}_q(out)$. 

Observe first that being $\alpha>\frac{1}{2}$, the length of the Loop-Erased path $\Gamma$ must be ``small'' with high probability. In particular we can bound
\begin{align*}
\P^{LE_q}_x\(|\Gamma|>\sqrt{N} \)\le& \P(T_q>\sqrt{N})\\
=&\(1-\frac{N^\alpha}{N+N^{1-\beta}+N^\alpha} \)^{\sqrt{N}}\\
=&o(1),
\end{align*}
hence
\begin{align*}
U^{(N)}_q(in)=&o(1)+\sum_{\gamma:\:|\gamma|\le\sqrt{n}}\P_x^{LE_q}(\Gamma=\gamma)\P_y(T_\gamma>T_q)\\
\ge&\sum_{\gamma:\:|\gamma|\le\sqrt{N}}\P_x^{LE_q}(\Gamma=\gamma)\frac{N^\alpha}{\sqrt{N}+N^\alpha}\\
=&1-o(1).
\end{align*}
\qed

We next prove the remaining items in~\cref{phasetrans} for which we will implement a similar strategy which we start explaining.
In all remaining regimes we need to show that $U^{(N)}_q(\star)$, $\star\in\{in,out\}$ either vanishes or stays bounded away from zero. 
To this aim, we will use the representation in~\cref{g}.
%\begin{align}\label{formulavai}
%U^{(N)}_q(\star)=& \sum_{t=1}^{\infty} \P(T=t)\sum_{k=1}^{t-1} \tilde \P(\ell(t)=k) \hat{f}(t,k) \theta(t,k) P_{\star}^{\dagger}(t,k).
%\end{align}

Depending on the parameter regimes, we will split the sum over $t$ in different pieces to be treated according to the asymptotic behavior of the involved factors. 
To simplify the exposition we will restrict in what follows to the positive quadrant $\alpha,\beta>0$. We stress however that, as the reader can check, the following estimates hold true and actually converge faster even outside of the positive quadrant. 

Let us start with a few observations. 
We notice that $\hat{f}(n,k)\leq 1$ for every choice of $k,N,n$, moreover $\hat{f}(t,n)=0$ if $n\ge N$.
Furthermore, for each $N$ , 
\begin{equation}\label{sum1}\sum_{n=1}^{\infty} \P(T_q=n)\sum_{k=1}^{n} \tilde \P_{\underline{1}}(\ell(n)=k)=\sum_{n=1}^{\infty} \P(T_q=n)=1,\end{equation} 
and while estimating the involved factors it will be crucial the behavior of the product  $\left(\hat{f} \theta P_{\star}^{\dagger}\right)(n,k)$ for which we can in general observe the following facts.

\begin{enumerate}[(A)]
	%\item\label{a} $\hat{f}(t,k)\leq 1$ for every choice of $k,N,t$, moreover $\hat{f}(t,k)=0$ if $t\ge N$.
	\item\label{b} For any $\varepsilon >0$, if $n>N^{1/2+\varepsilon}$, then it follows from~\cref{eolo} that $N\mapsto \hat{f}_N$ decays to zero, uniformly in $k$,  faster than any polynomial as $N\to \infty$.
	For such $n$'s , since $N\mapsto \theta_N P^\dagger_{\star}$ is polynomially bounded (uniformly in $n,k$), the contribution in~\cref{g} of such terms can be neglected.
	\item\label{c} Whenever we consider $n$'s for which 
	$ \theta P^\dagger_{\star}=o(1)$, because of~\cref{sum1} and the uniform control on $\hat{f}$, the contribution of such terms in~\cref{g} can also be neglected.
	\item\label{d} For $n$'s for which neither~\cref{b} nor~\cref{c} hold,  we will estimate the asymptotics of such part of the sum by controlling the mass of the geometric time $T_q$ against $\theta P^\dagger_{\star}$, and in the most delicate cases (on the separation lines in~\cref{fig:phdiag}), taking into account the behavior of the local time too.  
\end{enumerate}

We are now ready to treat the remaining parameter regimes using such facts.

\noindent{
	\bf{ Proof of {\bf(d)}:  $\alpha<\min\{\frac{1}{2},1-\beta\}$ (changing-communities before dying) } }

In this regime, the overall picture resembles the phenomenology of the complete graph. In particular, the RW will manage to change community before being killed and up to the killing time scale, it will forget its starting community. Moreover, with high probability a single tree of size $2N(1-o(1))$ will be formed, so that, given any two points $x,y$, they will end up in the same tree with high probability independently on their communities.

To prove the claim notice that, uniformly in $n,k$, 
\begin{equation}\label{Pblue}
P^\dagger_{\star}(n,k)\sim \frac{N^{1-\beta+\alpha} + N^{\alpha} k_\star}
{2N^{1-\beta+\alpha}+nN^{1-\beta}+k(n-k)}= \frac{N^{1-\beta+\alpha} }
{2N^{1-\beta+\alpha}+nN^{1-\beta}+k(n-k)}+ O\left( \frac{1}{N^{1-\beta-\alpha} } \right).
\end{equation}
As a consequence the asymptotics of $U^{(N)}_q(\star)$ will be independent of $\star$. 
To show that such a limit is zero we argue as follows.
Within this parameter region:
\begin{equation}
\theta(n,k)\sim 1+
\frac{nN^{\alpha} + 2k(n-k)}
{2N^{1-\beta+\alpha}}, 
\end{equation}
which together with~\cref{Pblue} leads to
\begin{align}\label{TPblue}
\nonumber\theta P^\dagger_{\star}(n,k)=& 
\frac{N^{1-\beta+\alpha}}
{2N^{1-\beta+\alpha}+nN^{1-\beta}+k(n-k)}
+\frac{k(n-k)}
{2N^{1-\beta+\alpha}+nN^{1-\beta}+k(n-k)}
+ O\left( \frac{k(n-k)}{N^{2(1-\beta)} }\right)
+ O\left( \frac{nN^{\alpha}}{N^{2(1-\beta)}} \right)\\
=:&\theta P^\dagger_I(n,k) +\theta P^\dagger_{II}(n,k)+\theta P^\dagger_{III}(n,k)+\theta P^\dagger_{IV}(n,k),\end{align}
We can now plug in this asymptotic representation of $\theta P^\dagger_{\star}$ in~\cref{g}, and separately treat the four resulting terms.

For the first term, namely the sum in~\cref{g} with $\theta P^\dagger_I$ in place of 
$\theta P^\dagger_{\star}$, we split the sum in $n$ into two parts at $N^{\alpha+\varepsilon}$, for small $\varepsilon>0$, and show that they both goes to zero, by using~\cref{d} and~\cref{c}, respectively
In fact, with this ``cut'' we see that: 
%\begin{itemize}
%	\item for $t\leq N^{\alpha+\varepsilon}$,   $\theta P^\dagger_I(t,k)=\Theta(1)$ uniformly in $k,t$, and using~\cref{a} we see that this first contribution to this sum is asymptotically bounded from above by $\sum_{t\leq N^{\alpha+\varepsilon}} \P_N(T=t)$  which is a $o(1)$, as explained in~\cref{c}.
%	\item if else $t \in [N^{\alpha+\varepsilon}, N]$, then 	$\theta P^\dagger_I(t,k)=o(1)$ uniformly in $k,t$, and using~\cref{c} we get the claim.
%\end{itemize}
%In formulas
%{\bf Term $\theta_N P_N(I)$.}
\begin{align}\label{formulavai1}
(I):= &\sum_{n=1}^{\infty} \P(T_q=n)\sum_{k=1}^{n} \tilde \P_{\underline{1}}(\ell(n)=k) \hat{f}(n,k) \theta P^\dagger_I(n,k)\\ =&\sum_{n<N^{\alpha+\varepsilon}} \P(T_q=n)\sum_{k=1}^{n} \tilde \P_{\underline{1}}(\ell(n)=k)\cdot 1\cdot \Theta(1)+\sum_{n\ge N^{\alpha+\varepsilon}}\P(T_q=n)\sum_{k=0}^{n} \tilde \P_{\underline{1}}(\ell(n)=k)\cdot 1 \cdot o(1)\\
=&\Theta\(\sum_{n<N^{\alpha+\varepsilon}} \P(T_q=n)\)+o(1)=o(1).
\end{align}

%{\bf Term $\theta_N P_N(II)$.}
Analogously, for the second term we split the sum over $n$ into two parts at $N^{1/2+\varepsilon}$, with small $\varepsilon>0$.  Using~\cref{d} for the first part and~\cref{b} for the second one, we see that 
%\begin{itemize}
%	\item for $t<N^{\alpha-\varepsilon}$, $\theta_N P_N(II)=o_N(1)$ uniformly in $k,t$, and using~\cref{c} we get the claim.
%\item for $t \leq N^{1/2+\varepsilon},$ $\theta P^\dagger_{II}(t,k)=O_N(1)$. With this bound and~\ref{a}, we can conclude that this truncation of the sum is asymptotically bounded from above by $\sum_{t\leq N^{1/2+\varepsilon}} \P(T=t)$  which is again $o(1)$.
%\item if else $t \in [N^{1/2+\varepsilon}, N]$, the remaining part of the sum can be neglected thanks to~\ref{b}.
%\end{itemize}
%More precisely,
\begin{align}\label{formulavai2}
(II):= &\sum_{n=1}^{\infty} \P(T_q=n)\sum_{k=1}^{n} \tilde \P_{\underline{1}}(\ell(n)=k) \hat{f}(n,k) \theta P^\dagger_{II}(n,k)\\
=&\sum_{n<N^{1/2+\varepsilon}} \P(T_q=n)\sum_{k=1}^{n} \tilde \P_{\underline{1}}(\ell(n)=k)\cdot 1\cdot O(1)+o(1)\\
=&O\(\sum_{n<N^{1/2+\varepsilon}} \P(T_q=n) \)+o(1)\\
=&o(1).
\end{align}

%{\bf Term $\theta_N P_N(III)$.}
For the third term we need to split the corresponding sum into three parts at  $T_1:=N^{1-\beta-\varepsilon}$ and $T_2:=N^{1/2+\varepsilon}$, which will be controlled by~\cref{c},~\cref{d} and~\cref{b}, respectively. That is
%In this case:
%\begin{itemize}
%	\item for $t\leq T_1,\theta P^\dagger_{III}(t,k)=o_N(1)$ and this contribution vanishes by using~\cref{c}.
%	\item for $t\geq T_2,$  we can argue as in ~\ref{b}.
%	\item We next notice that, depending on $\beta$, $T_2$ could be smaller than $T_1$ in which case we are done. Alternatively, we need to consider the window $[T_1,T_2]$.  In this scenario, we see that $\theta P^\dagger_{III}(t,k)=O(N^{-1+2\beta+2\varepsilon})$ but $\sum_{t\leq T_2}\P_N(T=t)\sim N^{-1/2+\alpha+\varepsilon}$, hence their product turns out to be $o(1)$.
%\end{itemize}
%In formulas,
\begin{align}\label{formulavai3}
(III):= &\sum_{n=1}^{\infty} \P(T_q=n)\sum_{k=1}^{n} \tilde \P_{\underline{1}}(\ell(n)=k) \hat{f}(n,k) \theta P^\dagger_{III}(n,k)\\
\le&\sum_{n<T_1} \P(T_q=n)\sum_{k=1}^{n} \tilde \P_{\underline{1}}(\ell(n)=k)\cdot 1 \cdot o(1)+\sum_{n= T_1}^{T_2}\P(T_q=n))\sum_{k=1}^{n} \tilde \P_{\underline{1}}(\ell(n)=k)\cdot 1 \cdot O(N^{-1+2\beta+2\varepsilon})+o(1)\nonumber\\
=&o(1)+O\(N^{\alpha-\beta-\varepsilon}\cdot 1\cdot 1\cdot N^{-1+2\beta+2\varepsilon}\)+o(1)\\
=&o(1).
\end{align}

%{\bf Term $\theta_N P_N(IV)$.}
Finally, for the last term, we split the sum at 
$N^{1/2+\varepsilon}$. 
%Once more we will show that the resulting two parts will go to zero as $N$ grows.
Indeed we see that: 
%\begin{itemize}
%	\item for $t\leq T_1$, we can use~\cref{c}.
on the one hand, for $n\le N^{1/2+\varepsilon}$, we can use~\cref{d} since 
$$\theta P^\dagger_{IV}(n,k)=O\(N^{\frac{1}{2}+\varepsilon+\alpha-2(1-\beta)} \)\qquad\text{ and }\qquad\P\(T_q\le N^{\frac{1}{2}+\varepsilon}\)=O\(N^{-\frac{1}{2}+\alpha+\varepsilon} \).$$
On the other hand, for  $n\geq N^{1/2+\varepsilon}$, we can argue as in~\cref{b}.
%\end{itemize}
Hence,
\begin{align}\label{formulavai4}
(IV):= &\sum_{n=1}^{\infty} \P(T_q=n)\sum_{k=1}^{n} \tilde \P_{\underline{1}}(\ell(n)=k) \hat{f}(n,k) \theta P^\dagger_{IV}(n,k)\\
\le&\sum_{n=1}^{N^{1/2+\varepsilon}} \P(T_q=n)\sum_{k=1}^{n} \tilde \P_{\underline{1}}(\ell(n)=k)\cdot 1\cdot      O\(N^{\frac{1}{2}+\varepsilon+\alpha-2(1-\beta)} \)+o(1)\\
%=&\sum_{t=1}^{N^{1/2+\varepsilon}} \P(T=t)\cdot 1\cdot 1\cdot      O\(N^{\frac{1}{2}+\varepsilon+\alpha-2(1-\beta)} \)+o(1)\\
=&O\(N^{-\frac{1}{2}+\alpha+\varepsilon} \cdot 1\cdot 1\cdot      N^{\frac{1}{2}+\varepsilon+\alpha-2(1-\beta)} \)+o(1)=o(1)
%=&O\(N^{-2(1-\beta-\alpha-\varepsilon)} \)+o(1)\\
%=&o(1).
\end{align}
\qed

\noindent{\bf{Proofs of {\bf(c)} and {\bf(e)}  (high-entropy separating lines)}}

We start by proving {\bf(e)}, i.e.
\begin{equation}
\text{if } \alpha=\frac{1}{2}<1-\beta\Longrightarrow \exists \varepsilon>0\text{ s.t. }\lim_{N\to\infty}U^{(N)}_q(in)=U_q(out)=\varepsilon.
\end{equation}
Start noting that under our assumptions on $\alpha$ and $\beta$ we have that
\begin{equation}\label{thetagiallo}
\theta(n,k)\sim\frac{n\sqrt{N}+2N^{\frac{3}{2}-\beta}+2k(n-k)}{2N^{\frac{3}{2}-\beta}},
\end{equation}
and
\begin{equation}\label{pmortegiallo}
P_{\star}^\dagger(n,k)\sim\frac{k_\star\sqrt{N}+N^{\frac{3}{2}-\beta}}{2N^{\frac{3}{2}-\beta}+nN^{1-\beta}+k(n-k)}.
\end{equation}
We are going to split the sum over $n$ in~\cref{g} in three parts:
\begin{itemize}
	\item $n\le N^{\frac{1}{2}-\varepsilon}$. For such $n$'s we have that the product $\theta P_\star^\dagger(n,k)$ is of order $1$. Hence we can neglect this part by using~\cref{d} together with the estimate $$\P(T_q\le N^{\frac{1}{2}-\varepsilon})=O\(N^{-\frac{1}{2}-\alpha-\varepsilon} \).$$
	\item $ n> N^{\frac{1}{2}+\varepsilon}$. Also this part can be neglected thanks to the argument of~\cref{b}.
	\item $N^{\frac{1}{2}-\varepsilon}<n\le N^{\frac{1}{2}+\varepsilon}$. This is the delicate non-vanishing part. We start by noticing that, due to~\cref{thetagiallo} and~\cref{pmortegiallo}, the leading term in $\theta P_\star^\dagger$ does not involve $k_\star$, so that ---at first order--- $U^{(N)}_q(in)$ must equal $U^{(N)}_q(out)$. In order to show that the latter two are asymptotically bounded away from zero, we fix $c\in(0,1)$ and consider
	\begin{align}
	U^{(N)}_q(\star)\ge&\sum_{n=c\sqrt{N}}^{\sqrt{N}/c}\P(T_q=n)\sum_{k=1}^{n}\tilde \P_{\underline{1}}(\ell(n)=k)\theta(n,k)P^\dagger_\star(n,k)\hat{f}(n,k)\\
	\hat f=\Theta(1)\Rightarrow=&\Omega\( \sum_{n=c\sqrt{N}}^{\sqrt{N}/c}\P(T_q=n)\sum_{k=1}^{n}\tilde \P_{\underline{1}}(\ell(n)=k)\theta(t,k) P^\dagger_\star(n,k)\)\\
	\theta P^\dagger_\star(n,k)\in\[\frac{1}{2+c^{-1}},\frac{1}{2+c}\]\Rightarrow=&\Omega\(\sum_{n=c\sqrt{N}}^{\sqrt{N}/c}\P(T_q=n) \)=\Omega(1).\label{lessthan1}
	\end{align}
	Moreover, thanks to~\cref{lessthan1} we can easily deduce that the limit is strictly smaller than $\frac{1}{2}$.
\end{itemize}

We next conclude by giving the proof of {\bf(e)}, i.e., we are going to show that
\begin{equation}
\text{if } \alpha=1-\beta<\frac{1}{2}\Longrightarrow \exists \varepsilon>0\:\text{ s.t. }\lim_{N\to\infty}U^{(N)}_q(in)=0\:\text{ while }\lim_{N\to\infty}U^{(N)}_q(out)=\varepsilon.
\end{equation}

Observe that, under our assumptions on $\alpha$ and $\beta$, we have that
\begin{equation}
\theta(n,k)\sim\frac{3N^{2\alpha}+nN^\alpha+2k(n-k)}{3N^{2\alpha}},
\end{equation}
and
\begin{equation}
P_{\star}^\dagger(n,k)\sim\frac{N^{2\alpha}+k_\star N^\alpha}{3N^{2\alpha}+2nN^{\alpha}+k(n-k)},
\end{equation}
hence, their product behaves asymptotically as
\begin{equation}\label{formulavai5}
\theta P_{\star}^\dagger(n,k)=\Theta\(1+\frac{k_\star}{N^\alpha}\).
\end{equation}
To evaluate the asymptotic behavior of $U^{(N)}_q(\star)$, we split the sum over $n$ in~\cref{g} in three pieces:
\begin{itemize}
	\item $n\le N^{\alpha+\varepsilon}$: where, thanks to~\cref{formulavai5}, we know that  $\theta P_{\star}^\dagger(n,k)=O(N^\varepsilon)$. We argue as in~\cref{d}, obtaining
	\begin{align}
	\sum_{n\le N^{\alpha+\varepsilon}}\P(T_q=n)\sum_{k=1}^{n}\tilde \P_{\underline{1}}(\ell(n)=k)\theta(n,k)P^\dagger_\star(n,k)\hat{f}(n,k)\le&O\(N^\varepsilon\sum_{n\le N^{\alpha+\varepsilon}}\P(T_q=n)\)\\
	=&O\(N^{-1+2\alpha} \)
	\end{align}
	\item $n> N^{\frac{1}{2}+\varepsilon}$: in this case we can argue as in~\cref{b}.
	\item $N^{\alpha+\varepsilon}<n\le N^{\frac{1}{2}+\varepsilon}$: in this case we have to distinguish between $U^{(N)}_q(in)$ and $U^{(N)}_q(out)$.
\end{itemize}
Consider first $U^{(N)}_q(in)$. We call $E_n$ the following event concerning the Markov chain $(\tilde X_n)_{n\in\N}$
\begin{equation}
E_n:=\left\{\text{At least one jump occurs before time $n$}\right\}.
\end{equation}
Notice that if $N^{\alpha+\varepsilon}<n\le N^{\frac{1}{2}+\varepsilon}$ then the event $E_n^c$ occurs with high probability.
Hence, for any choice of $n\in[1,N]$ and $k\in[1,n]$ we can write
\begin{align}
\tilde\P_{\underline{1}}\(\ell(n)=k \)=&\tilde\P_{\underline{1}}(\ell(n)=k|E_n^c)\tilde\P_{\underline{1}}(E_n^c)+\tilde\P_{\underline{1}}(\ell(n)=k|E_n)\tilde\P_{\underline{1}}(E_n)
%=&\tilde\P(\ell(t)=k|E_t^c)+O(N^{\alpha-\beta+\varepsilon})\\
=\delta_{k,n}+o(1),
\end{align}
$\delta_{k,n}$ being the Kronecker delta.
Hence
\begin{align}
\sum_{n= N^{\alpha+\varepsilon}}^{N^{1/2+\varepsilon}}\P(T_q=n)\sum_{k=1}^{n}\tilde \P_{\underline{1}}(\ell(n)=k)\theta P^\dagger_{in}(n,k)\hat{f}(n,k)=&\Theta\(
\sum_{n= N^{\alpha+\varepsilon}}^{N^{1/2+\varepsilon}}\P(T_q=n)\sum_{k=1}^{n}\delta_{k,n}\(\frac{n-k}{N^\alpha}+1\)\)\\
=&\Theta\(
\sum_{n= N^{\alpha+\varepsilon}}^{N^{1/2+\varepsilon}}\P(T_q=n)\)=o(1).
\end{align}
Concerning $U^{(N)}_q(out)$, it is easy to get a lower bound via a soft argument by considering the events
\begin{equation}
B_x=\left\{\text{The LERW starting at $x$ never changes community} \right\}
\end{equation}
\begin{equation}
B'_y=\left\{\text{The RW starting at $y$ does not change community before dying} \right\}.
\end{equation}
Indeed,
\begin{align*}
U^{(N)}_q(out)\ge&\P\(B_x\)\P\(B'_y\)=\(\frac{N^\alpha}{N^\alpha+N^{1-\beta}}\)^2=\frac{1}{4}.
\end{align*}
Finally, we are left to show that $U^{(N)}_q(out)$ is asymptotically bounded away from $1$. We consider the further split
\begin{align*}
U^{(N)}_q(out)\le&o(1)+\sum_{n=N^{\alpha+\varepsilon}}^{\sqrt{N}}\P(T_q=n)\sum_{k=1}^{n}\tilde{\P}_{\underline{1}}(\ell(n)=k)(\hat f\theta P^\dagger_{out})(n,k)+\sum_{n=\sqrt{N}}^{N^{\frac{1}{2}+\varepsilon}}\P(T_q=n)\sum_{k=1}^{n}\tilde{\P}_{\underline{1}}(\ell(n)=k)(\hat f\theta P^\dagger_{out})(n,k).
\end{align*}
Focusing on the first sum in the latter display, thanks to~\cref{formulavai5}, we have that
\begin{align*}
\sum_{n=N^{\alpha+\varepsilon}}^{\sqrt{N}}\P(T_q=n)\sum_{k=1}^{n}\tilde{\P}_{\underline{1}}(\ell(n)=k)(\hat f\theta P^\dagger_{out})(n,k)\le&	\sum_{n= N^{\alpha+\varepsilon}}^{N^{1/2}}\P(T_q=n)\frac{n}{N^\alpha}+\sum_{n= N^{\alpha+\varepsilon}}^{N^{1/2}}\P(T_q=n)\\
=&\frac{1}{N}\sum_{n= N^{\alpha+\varepsilon}}^{N^{1/2}}\(1-\frac{1}{N^{1-\alpha}} \)^n+o(1)\\
\le&\frac{1}{N}\(\frac{\sqrt{N}(\sqrt{N}+1)}{2} \)\sim\frac{1}{2}.
\end{align*}
Concerning the second sum, we have
\begin{align*}
\sum_{n=\sqrt{N}}^{N^{\frac{1}{2}+\varepsilon}}\P(T_q=n)\sum_{k=1}^{n}\tilde{\P}_{\underline{1}}(\ell(n)=k)(\hat f\theta P^\dagger_{out})(n,k)=&O\(	\sum_{n=\sqrt{N}}^{N^{\frac{1}{2}+\varepsilon}}\P(T_q=n)\hat f(n,n)\frac{n}{N^\alpha} \)\\
=&O\(\frac{1}{N}\sum_{n=\sqrt{N}}^{N^{\frac{1}{2}+\varepsilon}}ne^{-\frac{n^2}{2N}}\)\\
=&O\(\frac{1}{\sqrt{N}}\sum_{m=1}^{N^{\varepsilon}}me^{-\frac{m^2}{2}}\)\\
=&O\(\frac{N^\varepsilon}{\sqrt{N}}\sum_{m=1}^{\infty}e^{-\frac{m^2}{2}} \)=o(1).
\end{align*}
%\end{itemize}
\qed\\

%%%%%%%%%%%%%%%%%%%%%%%%%%%%%%%%%%%%%%%%%%%%%%%%%%%%%%%%%%%%%%%%%%%%%%%%%%%%%%%%%%%%%%%%%%%Prova Corollario
\subsection{Proof of~\cref{macro}}
Let $0=\lambda_0\le \lambda_1\le\dots\le\lambda_{2N-1}$  be the eigenvalues of $-\cL$. As shown in\cite[Prop. 2.1]{AG}, the number of blocks of the induced partition, $|\Pi_q| $, is distributed as the sum of $2N$ independent Bernoulli random variables with success probabilities $\frac{q}{q+\lambda_i}$. That is
$$|\Pi_q|\overset{d}{\sim} \sum_{i=0}^{2N-1}X_i^{(q)},\qquad\text{
	with}\qquad X_i^{(q)}\overset{d}{\sim} Ber\(\frac{q}{q+\lambda_i} \),\quad i\in\left\{0,\dots,2N-1 \right\}$$
In case of the two-communities model we have
$$\lambda_0=0,\qquad\lambda_1=2N^{1-\beta},\qquad\lambda_i=N(1+N^{-\beta}),\quad i\in\left\{2,\dots,2N-1 \right\}.$$
Therefore
$$|\Pi_q|\overset{d}{\sim} 1+X+\sum_{i=1}^{2(N-1)}Y_i$$
where
$$X\overset{d}{\sim} Ber\(\frac{N^\alpha}{2N^{1-\beta}+N^\alpha} \)\qquad\text{and}\qquad Y_i\overset{d}{\sim} Ber\(\frac{N^\alpha}{N(1+N^{-\beta})+N^\alpha} \),\quad i\in\{1,\dots,2(N-1)\}.$$
Hence
$$\E|\Pi_q|\sim 1+ \frac{N^\alpha}{N^{1-\beta}+N^\alpha}+\frac{2N^{\alpha+1}}{N^\alpha+N}=\Theta(N^{\alpha\wedge 1}).$$
Moreover, we can prove the concentration result claimed in the first part of the statement by using the multiplicative version of the Chernoff bound on the sum of $Y_i$'s. Indeed, denoting by
$$S:=\sum_{i=1}^{2(N-1)}Y_i$$
we have that
$$\P\(\left|S-\E S \right|\ge \varepsilon\E S \)\le 2\exp\(-\frac{\varepsilon^2\E S}{3} \),$$
and since 
$$\E S\sim\frac{2N^{\alpha+1}}{N^\alpha+N}=\omega(1)$$
we can deduce the concentration of $|\Pi_q|$.\\
Notice also that the second part of the statement is a trivial consequence of the detectability result of~\cref{phasetrans}.
\qed

\subsection{Proof of~\cref{RWexpress}}
In this proof we will consider the probability measure $\nu_q$ on the space of rooted spanning forests studied in~\cite{AG}, namely,
\begin{equation}
\nu_q(F)=\frac{q^{|\rho(F)|}w(F)}{Z(q)}, \quad F\in \cF,
\end{equation}
where we denoted by $\rho(F)$ the set of roots of $F\in\cF$. As mentioned in~\cref{Wilson}, we stress that the measure in~\cref{LEP} can be obtained by projecting this forest measure $\nu_q(\cdot)$ on the set of partitions.

Call $\mathcal{B}_q$ the $\sigma$-field generated by the block structure $\Pi_q$ of the random forest $F$. By~\cite[Proposition 6.4]{AG}, we have
\begin{align}
\P\(x,y\in \rho(F)\bigg\rvert \cB_q \)=\ind_{\{B_q(x)\neq B_q(y)\}}\frac{\mu(x)\mu(y)}{\mu(B_q(x))\mu(B_q(y))}.
\end{align}
Now we notice that by~\cref{RWIP} and the tower property,
\begin{align}
\overline{U}_q(x,y)=\E\[\E\[\frac{\ind_{\{B_q(x)\neq B_q(y)\}}}{\mu(B_q(x))\mu(B_q(y))} \bigg\rvert \cB_q \] \]=\frac{1}{\mu(x)\mu(y)}\P\(x,y\in\rho(F)\).
\end{align}
We can now invoke~\cite[Theorem 3.4]{AG}, stating that the set of roots is a determinantal process with kernel $K_q$. As a consequence we obtain that
\begin{equation}
\P\(x,y\in\rho(F) \)=K_q(x,x)K_q(y,y)-K_q(x,y)K_q(y,x),
\end{equation}
and the claim readily follows. \qed

\subsection{Proof of~\cref{Rwdetection}}
We consider here the discrete time version of the process $X$ as presented in~\cref{proporso}, see~\eqref{discrete}. As a warm-up, we start by computing the potential in the complete graph with unitary weights. In this case, %choosing $w_1=1$,
\begin{equation}
K_q(x,y)=%\ind_{x=y}
\delta_{x,y}\P(T_q=1)+\sum_{t\ge 1}\P_x\(X_t=y\:|\:T_q=t+1 \)\P(T_q=t+1),
\end{equation}
where
\begin{equation}
r_q:=\frac{q}{N+q} \qquad\text{ and } \quad \P(T_q=t+1)=r_q(1-r_q)^{t},\qquad\forall t\in \N_0.
\end{equation}
Therefore,
\begin{align}
K_q(x,y)= r_q\delta_{x,y}+\frac{1}{N}\sum_{t\ge 1}r_q(1-r_q)^t
=r_q\delta_{x,y}+\frac{1}{N}\(1-r_q\)=\frac{q\delta_{x,y}+1}{q+N}.
\end{align}
From which:
\begin{equation}\label{completeRWIP}
\overline{U}_q(x,y)= \(\frac{N}{q+N}\)^2\(q^2+2q\).
\end{equation}
%Notice that if $q=\Omega(N)$ the potential diverges. On the other hand, if $q=o(N)$,
%\begin{equation}
%\overline{U}_q(x,y)\sim q^2+2q.
%\end{equation}
Thus, in order to have a non-degenerate potential on $\mathcal{K}_N$, we need to take $q=\Theta(1)$.\qed 

We next move to the mean-field-community model $\mathcal{K}_{2N}(w_1,w_2)$ with $w_1=1$, $w_2=N^{-\beta},\beta>0$ and arbitrary $q$. The corresponding discrete-time RW is killed at an independent geometric time $T_q\overset{d}{\sim} Geom(r_q)$ with
\begin{equation}\label{rate}
r_q:=\frac{q}{N+N^{1-\beta}+q}.
\end{equation}

Denoting by $J_t$ the random variable that counts the number of times, up to time $t$, in which this random walk changes community, we notice that:
\begin{equation}
\P(J_t=k\:|\:\tau=t+1)=\binom{t}{k}(1-c)^{t-k}c^k,\qquad\forall k\in[0,t],
\end{equation}
that is, conditioning on $T_q=t+1$, $J_t$ has binomial distribution $ Bin(t,c)$ with success parameter
\begin{equation}
c:=\frac{N^{1-\beta}}{N+N^{1-\beta}}.
\end{equation}
%Moreover, we call $J_t$ the random variable that counts the number of times within $t$ in which the %random walk goes from a community to the other. Clearly, 

We are now in shape to compute the probability that $x$ is absorbed in some $y$. Without loss of generality we assume $x\in[N]$, so that $y\in[N]$ and $y\in[2N]\setminus[N]$ determines the $in-$ and $out-$potential, respectively.
%\MQ{forse la frase qua sopra e' un po' misleading: infatti, il potenziale e' determinato anche da $K(x,x)$...}
%\LA{E' vero che il potenziale e' determinato anche da $K(x,x)$, ma qui sto solo dicendo che la notazione $y\in[N]$ o $y\in[2N]\setminus[N]$ (ad esempio usata in \cref{latter}) chiarisce se si sta guardando all in o out potenziale. Dunque non credo sia misleading, e lascerei cosi.}

Thus:
   
\begin{align}\notag
K_q(x,y)=&\delta_{x,y}\P\(T_q=1\)+\sum_{t\ge 1}\P(T_q=t+1)\sum_{k\ge 0}\P_x(X_t=y\:|\:J_t=k;\:T_q=t+1)\P(J_t=k\:|\:T_q=t+1)\\
%&\cdot\sum_{k\ge 0}\P_x(X_t=x\:|\:J_t=2k;\:T_q=t+1)\P(J_t=2k\:|\:T_q=t+1)\ind_{x\equiv y}+\\
%&+ \P_x(X_t=x\:|\:J_t=2k+1;\:\tau=t+1)\P(J_t=2k+1\:|\:\tau=t+1)\ind_{x\not\equiv y}\\
\label{latter}=&\delta_{x,y}r_q+\frac{1}{N}\sum_{t\ge 1}r_q(1-r_q)^t
\big[\ind_{y\in[N]}\P\(\text{Bin}(t,c)\in 2\N_0\)+\ind_{y\in[2N]\setminus[N]}\P\(\text{Bin}(t,c)\in 2\N_0+1\)\big]\\
\label{latter2}=&\delta_{x,y}r_q+O\(N^{-1}\),
\end{align}
where the last identity is due to the fact that the sum in~\cref{latter} is a probability and hence bounded above by $1$.

\noindent{{\bf (high killing)}
When $q=N^\alpha$, with $\alpha>0$, $r_q=\omega\(N^{-1}\)$, thus the $O\(N^{-1}\)$ term in~\cref{latter2} is negligible, and $\overline{U}_q(in/out)\sim N^2 r_q^2$. In particular, the potential diverges as $N^2$ or $N^{2\alpha}$ depending on $\alpha\geq 1$ or $\alpha<1$, respectively.
%\MQ{Non e' invece: ``In particular, the potential diverges as $N^2$ or $N^{2\alpha}$ depending on $\alpha\geq 1$ or $\alpha<1$, respectively.''?}

\noindent{{\bf (order one killing)} 
In the regime $q=O(1)$, the $O(N^{-1})$ term in~\cref{latter2} is no longer negligible and needs to be analyzed further. Let us first consider the sub-regime $q=\Theta(1)$.

%The double sum in~\cref{latter} can be thought of as a (sub-)convex combination of terms in $[0,1]$ with respect to the sub-probability measure $\(r(1-r)^t \)_{t\ge 1}$. Therefore,
%\begin{equation}\label{latter2}
%K_q(x,x)\sim r+O\(N^{-1}\),\qquad K_q(x,y)=O(N^{-1}).
%\end{equation}
%In order for the $O$ terms in~\cref{latter2} to be relevant to the first order we need $q=O(1)$. Start by choosing $q=\Theta(1)$ and 

Notice that, when $t=\Theta(1/r_q)$, 
\begin{equation}\label{333}
\E(\text{Bin}(t,c))=\frac{c}{r_q}=\frac{N^{1-\beta}}{q}=\begin{cases}
o(1)&\text{if }\beta>1\\
\omega(1)&\text{if }\beta<1.
\end{cases}
\end{equation}
Clearly, $\E(\text{Bin}(t,c))=o(1)$ implies that $\P\(\text{Bin}(t,c)\in 2\N_0 \)=1+o(1)$, while if $\E(\text{Bin}(t,c))=\omega(1)$ then $\P\(\text{Bin}(t,c)\in 2\N_0 \)=\frac{1}{2}+o(1).$
From which, if $\beta>1$, then
\begin{align}\label{111}
\sum_{t\ge 1}r_q(1-r_q)^t\P\(\text{Bin}(t,c)\in 2\N_0\)\sim&1,
\end{align}
while,  for $\beta<1$:
 \begin{align}\label{222}
 \sum_{t\ge 1}r_q(1-r_q)^t\P\(\text{Bin}(t,c)\in 2\N_0+1\)\sim \sum_{t\ge 1}r_q(1-r_q)^t\P\(\text{Bin}(t,c)\in 2\N_0\)\sim \frac{1}{2},
 \end{align}
 where in~\cref{111,222} we used the fact that, in order to compute the first order, it is sufficient to restrict the sum over $t$ to the values on the scale $\Theta(1/r_q)$.
By~\cref{latter} and the above estimates, we conclude that, for $\beta>1$: 
\begin{align}\label{res2}
K_q(x,y)\sim\begin{cases}
\frac{1}{N}&\text{if } y\in[N]\setminus \{x\}\\
\frac{t\cdot c}{N}=o(N^{-1})&\text{if } y\in[2N]\setminus[N],
\end{cases}
\end{align}
and $K_q(x,x)\sim\frac{q+1}{N}$, which together with~\cref{RWIP} lead to:

\begin{align}
&\beta>1\qquad\Longrightarrow\qquad \overline{U}_q(\star)\sim\begin{cases}
4q^2+8q&\text{if } \star= in\\
4q^2+8q+4&\text{if }\star=out
\end{cases}.
\end{align}

On the other hand, for $\beta<1$, the estimate in~\cref{222} shows that, regardless of the community of $y$, $K_q(x,y)\sim (\delta_{x,y}q+1/2)/N$. Thus the $in-$ and $out-$ potentials are asymptotically equivalent. In particular, $\overline{U}_q(in/out)\sim 4q^2+4q$.
%\MQ{Non e' $\overline{U}_q(in/out)\sim q^2+2q$? }
%\LA{No, mi viene $4q^2+4q$, prendi definizione e metti dentro le stime sopra. }

\noindent{{\bf (vanishing killing)} 
It remains to analyze the case when $q=N^{\alpha}$ for some negative $\alpha<0$.
In this case, we have that\begin{equation}\label{444}
\E(\text{Bin}(t,c))=N^{1-\beta-\alpha}=\begin{cases}
o(1)&\text{if }1-\alpha<\beta\\
\omega(1)&\text{if }1-\alpha>\beta.
\end{cases}
\end{equation}
We can then argue as in the case $q=\Theta(1)$ but distinguishing between $\beta$ being bigger or smaller than $1-\alpha$. In particular, due to~\cref{444}, when $\beta<1-\alpha$ the resulting
$in-$ and $out-$ potentials are asymptotically equivalent and decay as $N^\alpha$.
On the other hand, for $\beta>1-\alpha >1$, $r_q\sim N^{\alpha-1}$, which together with~\cref{444} and~\cref{latter} lead to the estimates: $K_q(x,x)\sim r_q+N^{-1}\sim N^{-1}$, $K_q(x,y)\sim N^{-1}$ for $y\in[N]\setminus\{x\}$ and $K_q(x,y)=o(N^{-1})$ for pairs $(x,y)$ in different communities. By plugging these estimates in~\cref{RWexpress} the statement follows. \qed

\section*{{Acknowledgments}} 
{
%A special acknowledgment is devoted to Alexandre Gaudilli\`ere for suggesting the ``bear-strategy'' in the proof of~\cref{proporso} and for the many inspiring discussions on the subject. 
L. Avena was supported by NWO Gravitation Grant 024.002.003-NETWORKS. M. Quattropani was partially supported by the INdAM-GNAMPA Project 2019 ``Markov chains and games on networks''.
Part of this work started during the preparation of the master thesis~\cite{Q16} and the authors are thankful to Diego Garlaschelli for acting as co-supervisor of this thesis project.
}


\begin{thebibliography}{99}

\bibitem{A91}
D.\ Aldous, The Continuum Random Tree. I.
\emph{Ann.\ Probab.\ } 19, 1--28 (1991).

\bibitem{ACGM1}
L.\ Avena, F.\ Castell, A.\ Gaudilli\`ere and C.\ M\'elot,
Intertwining wavelets or multiresolution analysis on graphs through random forests,
\emph{ACHA} DOI:10.1016/j.acha.2018.09.006 (2018).

\bibitem{ACGM2}
L.\ Avena, F.\ Castell, A.\ Gaudilli\`ere and C.\ M\'elot,
Random Forests and Networks Analysis,
\emph{J.\ Stat.\ Phys.\ } 173, 985--1027 (2018).

\bibitem{ACGM3}
L.\ Avena, F.\ Castell, A.\ Gaudilli\`ere and C.\ M\'elot,
Approximate and exact solutions of intertwining equations through random spanning forests,
\emph{ArXiv:1702.05992} (2017).

\bibitem{AG}
L.\ Avena and A.\ Gaudilli\`ere, 
Two applications of random spanning forests,
\emph{J.\ Theor.\ Probab.\ } 31, 1975--2004 (2018).

\bibitem{AG1}
L.\ Avena and A.\ Gaudilli\`ere, 
A proof of the transfer-current theorem in absence of reversibility, 
\emph{Stat.\ Probab.\ Lett.\ } 142, 17--22 (2018).


\bibitem{BK05}
I.\ Benjamini and G.\ Kozma,
Loop-erased random walk on a torus in dimensions 4 and above,
\emph{Comm.\ Math.\ Phys.\ } 259, 257--286 (2005).

\bibitem{BP93}
R.\ Burton and R.\ Pemantle,
Local characteristics, entropy and limit theorems for spanning trees and domino tilings via transfer-impedances,
\emph{Ann.\ Probab.\ } 21, 1329--1371 (1993).

\bibitem{G80} 
G.\ R.\ Grimmett, Random labelled trees and their branching networks,
\emph{J.\ Aust.\ Math.\ Soc.\ Ser.\ A} 30, 229--237 (1980). 

%\bibitem{G09} 
%G.\ R.\ Grimmett, \emph{The Random-Cluster Model}. Berlin: Springer-Verlag (2009).

\bibitem{HNT18} 
J.\ Hladk\'y, A.\ Nachmias and T.\ Tran, The local limit of the uniform spanning tree on dense graphs, \emph{J.\ Stat.\ Phys.\ }
DOI 10.1007/s10955-017-1933-5 (2018).

\bibitem{K07}
G.\ Kozma, The scaling limit of loop-erased random walk in three dimensions, 
\emph{Acta Math.\ } 199, 29--152 (2007).

\bibitem{LSW04}
G.\ F.\ Lawler, O.\ Schramm and W.\ Werner,
Conformal invariance of planar loop-erased random walks and uniform spanning trees,
\emph{Ann.\ Probab.\ } 32, 939--995 (2004).

%\bibitem{LS18}
%X.\ Li and D.\ Shiraishi,
%Convergence of three-dimensional loop-erased random walk in the natural parametrization,
%\emph{ArXiv:1811.11685} (2018).


\bibitem{M00}
P.\ Marchal, Loop-erased random walks, spanning trees and Hamiltonian cycles,
\emph{Elect.\ Comm.\ Probab.\ } 5, 39--50 (2000).

\bibitem{P91}
R.\ Pemantle, Choosing a spanning tree for the integer lattice uniformly, \emph{Ann.\ Probab.\ } 19 (4), 1559–-1574 (1991).

\bibitem{Pitman02}
J.\ Pitman, \emph{Combinatorial stochastic processes}. Lecture notes in Mathematics, Ecole d'Et\'e de Probabilit\'es de Saint-Flour XXXII, Springer-Verlag Berlin/Heidelberg (2002).

\bibitem{P02}
B.\ Pittel, Note on exact and asymptotic distributions of the parameters of the loop-erased random walk on the complete graph,
In: Chauvin B.\ , Flajolet P.\ , Gardy D.\ , Mokkadem A.\ (eds) \emph{Mathematics and Computer Science II. Trends in Mathematics}, Birkh\"auser, Basel (2002).

\bibitem{Q16}
M.\ Quattropani, \emph{Spectral techniques for community detection: a probabilistic perspective}. Master thesis, Leiden University 
(2016).

\bibitem{LS19}
X.\ Li and D.\ Shiraishi, One-point function estimates for loop-erased random walk in three dimensions, \emph{Electron.\ J.\ Probab.\ } 24 (2019).

\bibitem{S00}
O.\ Schramm, Scaling limits of loop-erased random walks and uniform spanning trees, \emph{Israel J.\ Math.\ } 118, 221--288 (2000).

\bibitem{S09}
J.\ Schweinsberg, The loop-erased random walk and the uniform spanning tree on the four-dimensional discrete torus, \emph{Probab.\ Theory Relat.\ Fields} 144 (3--4), 319--370 (2009).

\bibitem{S19}
D.\ Shiraishi, Hausdorff dimension of the scaling limit of loop-erased random walk in three dimensions, \emph{Ann.\ Inst.\ H.\ Poincar\'e Probab.\ Statist.\ }55 (2) 791--834 (2019).


\bibitem{W96}
D.\ Wilson, Generating random spanning trees more quickly than the cover time,
{\it Proceedings of the twenty-eight annual acm symposium on the theory of computing}, 296--303 (1996).


\end{thebibliography}
\end{document}